\documentclass[10pt,reqno,oneside,a4paper]{amsart}

\usepackage{subfiles}
\usepackage[english]{babel}
\usepackage[utf8]{inputenc}
\usepackage{amsmath}
\usepackage{amsthm}
\usepackage{amssymb}
\usepackage{amsfonts}
\usepackage{graphicx}
\usepackage[shortlabels]{enumitem}

\usepackage[top=30truemm,bottom=30truemm,left=30truemm,right=30truemm]{geometry}

\usepackage{adjustbox}
\usepackage{listings}
\usepackage{color}
\usepackage{upgreek}
\usepackage{mathtools}
\usepackage{color}
\usepackage{empheq}
\usepackage{bm}
\usepackage{csquotes}
\usepackage{tikz}
\usetikzlibrary{matrix}
\usepackage{tikz-cd}
\usepackage{setspace}
\usepackage[norelsize]{algorithm2e}
\usepackage{adjustbox}
\usepackage{mathrsfs}
\usepackage{stmaryrd}
\usetikzlibrary{decorations.pathmorphing,shapes}
\usetikzlibrary{decorations.pathreplacing,calligraphy}
\usepackage{rotating}
\usepackage{pdflscape}
\usepackage{subcaption}
\usepackage{afterpage}
\usepackage[all,cmtip]{xy}
\usepackage{tabu}
\usepackage{stackengine}
\usepackage{caption}
\usetikzlibrary{arrows,calc}
\usepackage{hyperref}
\usepackage{scalerel}
\usepackage{centernot}
\usepackage{float}
\usepackage{tabularx}
\usepackage{empheq}
\usepackage{multirow}
\usepackage{makecell}
\usetikzlibrary{calc}
\usetikzlibrary{arrows.meta}
\usetikzlibrary{chains}

\makeatletter
\DeclareRobustCommand{\rvdots}{%
  \vbox{
    \baselineskip4\p@\lineskiplimit\z@
    \kern-\p@
    \hbox{.}\hbox{.}\hbox{.}
  }}
\makeatother

\allowdisplaybreaks

\newcommand{\tikzAngleOfLine}{\tikz@AngleOfLine}
\def\tikz@AngleOfLine(#1)(#2)#3{%
\pgfmathanglebetweenpoints{%
\pgfpointanchor{#1}{center}}{%
\pgfpointanchor{#2}{center}}
\pgfmathsetmacro{#3}{\pgfmathresult}%
}

\makeatletter
\patchcmd{\@setaddresses}{\indent}{\noindent}{}{}
\patchcmd{\@setaddresses}{\indent}{\noindent}{}{}
\patchcmd{\@setaddresses}{\indent}{\noindent}{}{}
\patchcmd{\@setaddresses}{\indent}{\noindent}{}{}
\makeatother

\newcommand{\gap}{\hspace{1pt}}
\newcommand{\ZZ}{\mathbb{Z}}

\newcommand{\NN}{\mathbb{N}}

\DeclareMathOperator{\GKdim}{GKdim}

\DeclareMathOperator{\gldim}{gl.\hspace{-1pt}dim}

\DeclareMathOperator{\idim}{i.\hspace{-1pt}dim}

\DeclareMathOperator{\End}{End}

\DeclareMathOperator{\Hom}{Hom}

\DeclareMathOperator{\Ext}{Ext}

\DeclareMathOperator{\SL}{SL}

\newcommand{\id}{\text{\normalfont id}}

\newcommand{\hash}{\hspace{1pt} \# \hspace{1pt}}

\numberwithin{equation}{section}

\theoremstyle{definition}

\newtheorem{defn}[equation]{Definition}

\newtheorem{example}[equation]{Example}

\theoremstyle{plain}
\newtheorem{thm}[equation]{Theorem}
\newtheorem{prop}[equation]{Proposition}
\newtheorem{lem}[equation]{Lemma}

\theoremstyle{remark}
\newtheorem{rem}[equation]{Remark}

\newcommand\restr[2]{{% we make the whole thing an ordinary symbol
  \left.\kern-\nulldelimiterspace % automatically resize the bar with \right
  #1 % the function
  \right|_{#2} % this is the delimiter
  }}

\newcounter{sarrow}

\newcounter{darrow}

\makeatletter
\renewcommand*\env@matrix[1][\arraystretch]{%
  \edef\arraystretch{#1}%
  \hskip -\arraycolsep
  \let\@ifnextchar\new@ifnextchar
  \array{*\c@MaxMatrixCols c}}
\makeatother

\newboolean{printBibInSubfiles}
\setboolean{printBibInSubfiles}{true}
\def\bib{\ifthenelse{\boolean{printBibInSubfiles}}
           { \bibliographystyle{amsalpha} \bibliography{thesisbib} }
       {}
}

\makeatletter
\tikzset{
  column sep/.code=\def\pgfmatrixcolumnsep{\pgf@matrix@xscale*(#1)},
  row sep/.code   =\def\pgfmatrixrowsep{\pgf@matrix@yscale*(#1)},
  matrix xscale/.code=%
    \pgfmathsetmacro\pgf@matrix@xscale{\pgf@matrix@xscale*(#1)},
  matrix yscale/.code=%
    \pgfmathsetmacro\pgf@matrix@yscale{\pgf@matrix@yscale*(#1)},
  matrix scale/.style={/tikz/matrix xscale={#1},/tikz/matrix yscale={#1}}}
\def\pgf@matrix@xscale{1}
\def\pgf@matrix@yscale{1}
\makeatother

\definecolor{mygray}{gray}{0.85}

\def\dotfill#1{\cleaders\hbox to #1{.}\hfill}
\makeatletter
\def\myrulefill{\leavevmode\leaders\hrule height .7ex width 1ex depth -0.6ex\hfill\kern\z@}
\makeatother

%\numberwithin{equation}{section}

\title{\vspace*{-30pt}Group coactions on two-dimensional Artin-Schelter regular algebras}
%\title{On group coactions in dimension two}
\author{Simon Crawford}
\address{The University of Manchester, Alan Turing Building, Oxford Road, Manchester, M13 9PL, United Kingdom}
\email{simon.crawford@manchester.ac.uk}

\date{\today}
\subjclass[2020]{16E65, 16T05, 16W22.}
\keywords{Artin-Schelter regular algebra, group coaction, McKay quiver, Auslander map.}

\begin{document}
\begin{abstract}
We describe all possible coactions of finite groups (equivalently, all group gradings) on two-dimensional Artin-Schelter regular algebras. We give necessary and sufficient conditions for the associated Auslander map to be an isomorphism, and determine precisely when the invariant ring for the coaction is Artin-Schelter regular. The proofs of our results are combinatorial and exploit the structure of the McKay quiver associated to the coaction.
\end{abstract}
\maketitle

\vspace{-10pt}
\section{Introduction}
Noncommutative invariant theory has been the subject of intense and sustained research during the past two decades, and many results from commutative invariant theory have been shown to have noncommutative analogues. The survey article by Kirkman \cite{kirkmansurvey} provides an excellent overview of the area. \\
\indent In the commutative setting, one typically considers the action of a finite group $G$ consisting of graded automorphisms of a polynomial ring $R$, and then studies various rings, modules, and geometric objects that arise from this setup. A natural first step in generalising this framework is to replace the polynomial ring $R$ by a noncommutative analogue, a role which is often played by Artin-Schelter (AS) regular algebras. One shortcoming of this approach is that noncommutative rings often have quite small automorphism groups; however, it is possible to circumvent this issue by also allowing for actions of finite-dimensional Hopf algebras, which are often thought of as encoding \emph{quantum symmetry}. \\
\indent Coactions of finite groups on algebras provide one of the more easily-understood Hopf algebra actions, since they are equivalent to equipping the algebra with a group grading. Accordingly, they often serve as a good test case when attempting to generalise results from commutative invariant theory to noncommutative rings. In spite of this, there does not appear to have been a systematic study of group coactions on two-dimensional AS regular algebras, which is the focus of the present article. In particular, we will study two questions in this context which are motivated by two classical results from commutative invariant theory. \\
%\indent Indeed, these generalisations have proved quite fruitful, with many classical results from commutative invariant theory having noncommutative counterparts. For example, a result of Watanabe \cite[Theorem 1]{watanabe} says that if every element of $G \leqslant \GL(2,\Bbbk)$ has trivial determinant, then $R^G$ is Gorenstein. One of the first major results in noncommutative invariant theory was a generalisation of Watanabe's Theorem: J{\o}rgensen--Zhang proved that, if $A$ is AS regular and $G \leqslant \Autgr(A)$ is a finite group acting on $A$ with \emph{trivial homological determinant}, then $A^G$ is Gorenstein \cite{jorgensen}. This result was later extended to actions of Hopf algebras in \cite{kkzGor}. \textcolor{red}{Maybe get rid of this paragraph} \\
\indent There is a certain ring homomorphism, called the \emph{Auslander map}, associated to the data of an AS regular algebra and an action from a Hopf algebra. If this map is an isomorphism, then there are strong representation-theoretic consequences, and so one of the main research directions in noncommutative invariant theory has been to provide conditions for when this is the case. In the commutative setting of a finite group $G$ acting on a polynomial ring $R$, it is known that the Auslander map is an isomorphism if and only if $G$ contains no non-trivial reflections \cite{auslanderrational}. In particular, if $G \leqslant \SL(n,\Bbbk)$, then the Auslander map is an isomorphism. This result forms part of the celebrated \emph{McKay correspondence}. \\
\indent Versions of this result have been generalised to the noncommutative setting. In \cite{ckwzi}, it was shown that if $A$ is two-dimensional AS regular and $H$ is a Hopf algebra which acts with \emph{trivial homological determinant}, then the Auslander map is an isomorphism. The same result holds $H = \Bbbk G$ for a group $G$ which contains no \emph{quasi-reflections} \cite{crawfordActions}. One of the main results of this paper gives a complete answer for coactions of finite groups of two-dimensional AS regular algebras. As a first step, we give a classification of such coactions, where undefined terms will be defined in Section \ref{sec:Preliminaries}:

\begin{thm}[Proposition \ref{prop:Classifying}] \label{thm:intro1}
Suppose that $A$ is a two-dimensional AS regular algebra which is not commutative and which has an inner-faithful $G$-grading, where $G$ is finite and non-abelian. Then $A \cong \Bbbk\langle u,v \rangle/\langle u^2-v^2 \rangle$ and $G = \langle a,b \mid \mathcal{R} \rangle$ for some set of relations $\mathcal{R}$, one of which is $a^2 = b^2$, and where the grading satisfies $\deg_G u = a$, $\deg_G v = b$.
\end{thm}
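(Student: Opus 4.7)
The plan is to work directly with the defining relation in a basis of $G$-homogeneous generators. First, I restrict the grading to $V := A_1$, giving $V = \bigoplus_{g\in G} V_g$. Since $\dim V = 2$, this decomposition is either concentrated in a single $G$-degree or splits as $V = V_a \oplus V_b$ with $a \neq b$. In the former case, the whole algebra is supported in the cyclic subgroup $\langle g\rangle$, and inner-faithfulness forces $G = \langle g\rangle$, contradicting the non-abelian hypothesis. So the latter case must occur. Picking $G$-homogeneous generators $u \in V_a$ and $v \in V_b$, inner-faithfulness then gives $G = \langle a,b\rangle$.

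Next, since $A$ is two-dimensional AS regular with $\dim A_2 = 3$, there is a single quadratic defining relation, which I write as $r = \alpha u^2 + \beta uv + \gamma vu + \delta v^2$. The four monomials lie in (a priori distinct) $G$-graded components $A_{a^2}, A_{ab}, A_{ba}, A_{b^2}$, so homogeneity of $r$ forces all non-zero summands to share a common $G$-degree. A short case analysis collapses the possibilities: the equalities $a^2 = ab$, $a^2 = ba$, $b^2 = ab$ and $b^2 = ba$ each force $a = b$, while $ab = ba$ is excluded by non-abelianness of $G = \langle a,b\rangle$. The only available identification is $a^2 = b^2$, so $\beta = \gamma = 0$ and $r = \alpha u^2 + \delta v^2$. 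AS regularity rules out $r$ being a monomial (this would give the wrong Hilbert series), so $\alpha, \delta \in \Bbbk^\times$ and $a^2 = b^2$ in $G$. Rescaling $v$ by a square root of $-\alpha/\delta$ normalises $r$ to $u^2 - v^2$, yielding both the claimed isomorphism $A \cong \Bbbk\langle u,v\rangle/\langle u^2 - v^2\rangle$ and the stated grading, with $a^2 = b^2$ among the defining relations of $G$.

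The argument is genuinely elementary and I do not foresee a major obstacle; its strength comes from the fact that $\dim V = 2$ leaves very few configurations to consider. The subtlest point is translating the grading-theoretic notion of inner-faithfulness into the group-theoretic statement $G = \langle a,b\rangle$, which I expect to be immediate from definitions set up earlier in the paper. A minor technical issue is that the final rescaling requires $\Bbbk$ to contain a square root of $-\alpha/\delta$; if the paper works over an algebraically closed field this is automatic, otherwise the conclusion should be weakened to the statement that $u^2 = \lambda v^2$ for some $\lambda \in \Bbbk^\times$.
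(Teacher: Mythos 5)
Your route is genuinely different from the paper's. The paper starts from the classification (quantum plane or Jordan plane in their standard presentations), passes to a $G$-homogeneous basis $x=u+\alpha v$, $y=\beta u+v$ of $A_1$, and explicitly recomputes the defining relation in each of the two cases, deriving a contradiction for the Jordan plane and forcing $q=-1$ for the quantum plane. You instead work intrinsically: choose a homogeneous basis of $A_1$ first, observe that the one-dimensional quadratic relation space is a $G$-graded subspace of $A_1\otimes A_1$ and hence spanned by a homogeneous element, and then do pure degree bookkeeping. This is cleaner and avoids the Jordan/quantum case split entirely (the Jordan plane is eliminated for free, since it is not isomorphic to $\Bbbk\langle u,v\rangle/\langle u^2-v^2\rangle$); your dichotomy on $A_1$ and the translation of inner-faithfulness into $G=\langle a,b\rangle$ via Lemma \ref{lem:innerfaithful} are exactly right. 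You do implicitly use that the relation ideal is generated in degree $2$, so that the single homogeneous relation really presents $A$; this is harmless, as two-dimensional AS regular algebras generated in degree one are quadratic (Koszul), but it should be said. The square-root issue you flag is moot, since the paper assumes $\Bbbk$ algebraically closed of characteristic zero.

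There is, however, one incorrectly justified step: ruling out a monomial relation by ``wrong Hilbert series'' works for $r=u^2$ or $r=v^2$ (the quadratic monomial algebra $\Bbbk\langle u,v\rangle/\langle u^2\rangle$ has Fibonacci-type growth), but it fails for $r=uv$ or $r=vu$: the algebra $\Bbbk\langle u,v\rangle/\langle uv\rangle$ has $\Bbbk$-basis $\{v^iu^j\}$ and hence Hilbert series exactly $1/(1-t)^2$, the same as a two-dimensional AS regular algebra, so no contradiction arises from dimension counting. These sub-cases must be excluded, since a monomial is automatically $G$-homogeneous and requires no coincidence among $a^2, ab, ba, b^2$. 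The uniform fix is immediate: $A$ is a quantum or Jordan plane and hence a domain, so no product of two nonzero elements of $A_1$ can vanish, which kills all four monomial possibilities at once (and does not even require quadratic generation of the ideal); alternatively, $\Bbbk\langle u,v\rangle/\langle uv\rangle$ fails the AS Gorenstein condition. With that repair your argument is complete and reaches the same conclusion as Proposition \ref{prop:Classifying}.
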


While this result imposes quite strong restrictions on the possibilities for $A$ and $G$, one can still obtain many interesting examples which exhibit different representation-theoretic behaviour. In spite of this wealth of examples, we are able to establish a simple, uniform necessary and sufficient condition for the Auslander map to be an isomorphism:

\begin{thm}[{Theorem \ref{AuslanderForCoaction}}] \label{thm:intro2}
With the notation from Theorem \ref{thm:intro1}, the Auslander map associated to a pair $(A,G)$ is an isomorphism if and only if $|ab| = \frac{1}{2} |G|$.
\end{thm}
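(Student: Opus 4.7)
The plan is to reduce the theorem to a combinatorial condition on $G$ via the module structure of $A$ over $A^{coG} = A_e$. Since the $G$-grading is inner-faithful and $A$ is connected graded AS regular, $A$ is a $G$-Hopf--Galois extension of $A_e$. The canonical decomposition
\[ A \;=\; \bigoplus_{g \in G} A_g \]
exhibits $A$ as a sum of $|G|$ invertible (hence indecomposable) right $A_e$-modules, and by the standard characterization of the Auslander map in this setting (cf.~\cite{ckwzi,crawfordActions}), $\gamma$ is an isomorphism if and only if $A_g \not\cong A_{g'}$ as right $A_e$-modules for every pair $g \neq g'$ in $G$.

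To decide when this non-isomorphism holds, I would introduce the subgroup
\[ N \;:=\; \{\, g \in G : A_g \cong A_e \text{ as right } A_e\text{-modules}\,\} \;\leqslant\; G, \]
which arises as the kernel of the natural homomorphism $G \to \mathrm{Pic}(A_e)$, $g \mapsto [A_g]$ (well defined since $A_g \otimes_{A_e} A_h \cong A_{gh}$, so the $A_g$ give a $G$-graded Picard-type object over $A_e$). It follows that $A_g \cong A_{g'}$ iff $g^{-1}g' \in N$, and hence $\gamma$ is an isomorphism precisely when $N = \{e\}$.

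The crux is identifying $N$ explicitly, with the target formula $|N| = \tfrac{|G|}{2|ab|}$: once this is established, $N = \{e\}$ iff $|ab| = \tfrac{1}{2}|G|$ and the theorem follows. An element $g$ lies in $N$ iff $A_g$ admits a cyclic generator, i.e.\ some $x \in A_g$ with $xA_e = A_g$. Using the presentation $A = \Bbbk\langle u, v\rangle/(u^2 - v^2)$, the basis of $A$ given by normal-form monomials (e.g.~$u^{i_0} v u^{i_1} v \cdots v u^{i_n}$), and the McKay quiver from Theorem~\ref{thm:intro1} (vertices $G$; $u$-arrows $g \to ag$, $v$-arrows $g \to bg$) as bookkeeping, one enumerates the cyclic generators of each $A_g$ and tracks the resulting isomorphisms between summands. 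The main obstacle is ruling out any subtle isomorphism $A_g \cong A_{g'}$ not produced by an obvious normal element of $A$; the key case split is whether $a^2 \in \langle ab\rangle$ (equivalent, under the theorem's hypotheses, to $\langle ab\rangle$ having index $2$ in $G$). When this holds the central element $z = u^2 = v^2$ produces no new isomorphism and $N$ is trivial; otherwise $z$ (or a suitable combination with $uv$) yields a non-trivial element of $N$, giving $|N| > 1$ in exact agreement with the formula.
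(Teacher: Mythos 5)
Your reduction does not get off the ground because of a structural error: the homogeneous components $A_g$ are \emph{not} invertible $A_{1_G}$-modules. The $G$-grading here is compatible with the connected $\NN$-grading ($u$ and $v$ are $G$-homogeneous of $\NN$-degree $1$), so $A_{1_G} \supseteq A_0 = \Bbbk$ while every $A_g$ with $g \neq 1_G$ sits in strictly positive $\NN$-degrees; hence $A_g A_{g^{-1}} \subseteq (A_{1_G})_{\geqslant 2} \subsetneq A_{1_G}$, the grading is never strong, the map $g \mapsto [A_g]$ into $\mathrm{Pic}(A_{1_G})$ is undefined, and your $N$ is not a priori a subgroup. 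Relatedly, the equivalence you take as your starting point --- that $\gamma$ is an isomorphism if and only if the $A_g$ are pairwise non-isomorphic as right $A_{1_G}$-modules --- is asserted as ``standard'' but is not established in the works you point to (those give \emph{sufficient} conditions via trivial homological determinant or the absence of quasi-reflections, not this characterisation), so it would itself need a proof. Finally, the actual content of the theorem, the formula $|N| = |G|/(2|ab|)$, is announced only as a ``target'': the enumeration of cyclic generators and the exclusion of ``subtle isomorphisms'' is precisely where all the work would lie, and no argument is offered.

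The paper's route is entirely different and avoids these issues. By Theorem \ref{thm:FinDimLambda} (a restatement of the Bao--He--Zhang pertinency criterion), $\gamma$ is an isomorphism if and only if $\Lambda/\Lambda e_1 \Lambda$ is finite dimensional, where $\Lambda \cong A \hash (\Bbbk G)^*$ is the McKay quiver algebra of Theorem \ref{thm:QuiverAlgebra}. In the lattice presentation of that quiver there is exactly one nonzero path in $\Lambda$ from a vertex to any vertex south and/or east of it, so $e_j \Lambda / e_j \Lambda e_1 \Lambda$ is finite dimensional for every $j$ precisely when every row and every column of the toroidal presentation contains the vertex $1_G$. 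Since consecutive entries of a row are obtained by alternately multiplying on the left by $a^{-1}$ and $b^{-1}$, this is the statement that the list $g, a^{-1}g, b^{-1}a^{-1}g, a^{-1}b^{-1}a^{-1}g, \dots$ exhausts $G$ for every $g$, which Lemma \ref{EasyLem} (applied with $a^{-1}$ and $b^{-1}$) shows is equivalent to $|ab| = \frac{1}{2}|G|$. If you wish to salvage a module-theoretic argument, you would need both an honest analysis of the (non-invertible, generally non-projective) modules $A_g$ and an independent proof of your claimed characterisation of when the Auslander map is an isomorphism.
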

 
\indent A closely related problem is to identify when the invariant ring $A^H$ associated to this setup is again AS regular. For polynomial rings, a complete answer to this question is given by the Chevalley--Shephard--Todd Theorem, which says that the invariant ring is again a polynomial ring (so, in particular, AS regular) if and only if $G$ is generated by reflections \cite{chevalley,shephardtodd}. Returning to the noncommutative setting, much less is known, although there are families of examples for which we have complete answers. For example, \cite{three} provides many instances of semisimple Hopf algebras $H$ acting on two- and three-dimensional AS regular algebras such that $A^H$ is AS regular, and \cite{chenRigidity} shows that invariant rings of down-up algebras under group coactions are never AS regular. \\
\indent Our second main result gives a complete characterisation of when the invariant rings arising from group coactions are AS regular:

\begin{thm}[{Theorem \ref{thm:InvariantRegular}}] \label{thm:intro3}
The invariant ring associated to a pair $(A,G)$ from Theorem \ref{thm:intro1} is AS regular if and only if 
\begin{align*}
G = \langle a,b \mid a^2=b^2, a^{4m} = b^{4m} = (ab)^m =(ba)^m = 1_G \rangle
\end{align*}
for some $m \geqslant 2$.
\end{thm}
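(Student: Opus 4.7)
The plan is to analyze the invariant ring $A^G = A_{1_G}$ via its Hilbert series, combining the classification from Theorem \ref{thm:intro1} with the McKay quiver techniques developed earlier in the paper. By Theorem \ref{thm:intro1}, we may fix $A = \Bbbk\langle u,v\rangle/\langle u^2-v^2\rangle$ with $\deg_G u = a$, $\deg_G v = b$, and $a^2 = b^2$ in $G$; note that $c := a^2 = b^2$ is then central in $\Bbbk G$ (since $a^2 b = b^3 = ba^2$). The element $z := u^2 = v^2$ is central in $A$ of $G$-degree $c$, and since $A$ is quadratic, the multi-graded Hilbert series satisfies
\begin{align*}
H_A(t) \cdot \bigl(1 - (a+b)t + ct^2\bigr) = 1 \text{ in } \Bbbk G[[t]].
\end{align*}
The Hilbert series $H_{A^G}(t)$ is then the coefficient of $1_G$ in $(1 - (a+b)t + ct^2)^{-1}$, and AS regularity of $A^G$ of dimension $2$ forces $H_{A^G}(t) = 1/((1-t^{d_1})(1-t^{d_2}))$ for some positive integers $d_1, d_2$.

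For the sufficient direction, take $G = G_m := \langle a,b \mid a^2 = b^2, a^{4m} = (ab)^m = 1\rangle$ with $m \geq 2$. The elements $X := (uv)^m$ and $Y := (vu)^m$ are invariant of degree $2m$. A short induction on $m$, using the identity $(uv)^k(vu)^k = z^2(uv)^{k-1}(vu)^{k-1}$ (which follows from $v^2 = u^2$ and the centrality of $z$), yields $XY = YX = z^{2m}$, so $X$ and $Y$ commute. The subalgebra $\Bbbk[X,Y] \subseteq A^G$ thus has Hilbert series $1/(1-t^{2m})^2$, and a direct computation of the series above (using that $|ab| = m$ and $|a| = 4m$ in $G_m$) confirms that this equals $H_{A^G}(t)$. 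Hence $A^G = \Bbbk[X,Y]$ is AS regular.

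For the necessary direction, suppose $A^G$ is AS regular. Using the McKay quiver $Q_G$ (the Cayley graph of $G$ with generators $a,b$), a basis of $(A^G)_n$ corresponds to equivalence classes, modulo $u^2 = v^2$, of length-$n$ closed walks at $1_G$ in $Q_G$. The required form of $H_{A^G}(t)$ forces the smallest non-trivial closed walk at $1_G$ to have length $2m$ for some $m \geq 2$, pinning $|ab| = m$; a comparison of higher-order Hilbert series coefficients then forces $|a| = 4m$ exactly. Any additional relation in $G$ beyond those in $G_m$ would yield additional closed walks of length less than $4m$, disrupting the Hilbert series, so $G \cong G_m$. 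The main obstacle is this necessary direction: systematically excluding any additional group relation requires careful combinatorial control of the identifications produced by $u^2 = v^2$ on closed walks in $Q_G$, for which the McKay quiver framework developed earlier is essential.
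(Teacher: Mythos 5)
There is a genuine gap, and it sits exactly where you acknowledge it: the necessary direction. Saying that ``a comparison of higher-order Hilbert series coefficients then forces $|a|=4m$'' and that ``any additional relation in $G$ \ldots would disrupt the Hilbert series'' is not an argument; to run your strategy you would have to compute the $1_G$-component of $\bigl(1-(a+b)t+a^2t^2\bigr)^{-1}$ for an arbitrary proper quotient of $G_m$ (a nonabelian group you have not pinned down) and show it is never of the form $1/\bigl((1-t^{d_1})(1-t^{d_2})\bigr)$, which is a substantial computation you do not carry out and which is not obviously tractable by ``coefficient comparison.'' The paper avoids this entirely: it first shows every admissible $G$ is a quotient of $\Gamma_m$ (your $G_m$) with $m=|ab|$, proves $|\Gamma_m|=4m^2$ and $|a|=4m$ via an explicit $2\times 2$ matrix representation together with $\Gamma_m/\langle a^2\rangle\cong D_m$, and then argues that in any \emph{proper} quotient the order $i$ of $a$ must drop below $4m$, so $u^i$ is an invariant of degree $i<4m$ that cannot be written in terms of $(uv)^m$ and $(vu)^m$; hence $A_{1_G}$ needs at least three generators, contradicting the fact that a two-dimensional AS regular algebra has exactly two. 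Some such low-degree-invariant (or generator-counting) input is what your sketch is missing, and the Hilbert series alone does not supply it without the group-theoretic control of $|a|$ and $|G|$.

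The sufficiency direction also leans on assertions that carry all the weight. You use $|a|=4m$ in $G_m$, but the presentation only shows $|a|$ divides $4m$; establishing equality (equivalently $|G_m|=4m^2$) requires something like the paper's matrix representation. Your identity $XY=YX=z^{2m}$ gives commutativity of $X=(uv)^m$ and $Y=(vu)^m$ but not their algebraic independence, which you need before asserting that $\Bbbk[X,Y]$ has Hilbert series $1/(1-t^{2m})^2$; and the ``direct computation'' of the $1_G$-coefficient of the equivariant series for the nonabelian group $G_m$ is precisely the nontrivial walk count that the paper performs via the toroidal presentation of the McKay quiver (each element of $\Gamma_m$ occupies exactly one vertex of the $2m\times 2m$ torus, so $1_G$ sits only at the corners, every closed path at $1_G$ factors through $x$ and $y$, and these satisfy no relation beyond commutativity). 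So even granting the equivariant identity $H_A(t)\bigl(1-(a+b)t+a^2t^2\bigr)=1$, your proposal defers the genuinely combinatorial content rather than proving it.
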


To prove Theorems \ref{thm:intro2} and \ref{thm:intro3}, we first express the invariant ring (and a related ring) as the path algebra of a quiver with relations. This allows us to exploit quiver combinatorics which, when combined with results in the literature, establish our claims. In principle, it should be possible to prove these results without using quivers, but our techniques help illuminate why one might expect these results to be true, and can be generalised to other settings.  \\

\noindent\textbf{Acknowledgements.} The author is a Heilbronn fellow at the University of Manchester. Portions of this work were completed at the University of Washington while the author was in receipt of the Cecil King Travel Scholarship. The author is grateful for their financial support.

\section{Preliminaries} \label{sec:Preliminaries}
Throughout, $\Bbbk$ will denote an algebraically closed field of characteristic $0$. Let $A$ be a $\Bbbk$-algebra. We say that $A$ is \emph{connected graded} if it is $\NN$-graded with $A_0 = \Bbbk$. A graded $A$-module $M$ is said to be \emph{locally finite} if $\dim_\Bbbk M_n < \infty$ for all $n \in \ZZ$. \\
\indent Now assume that $A$ is connected graded and locally finite. The \emph{Gelfand-Kirillov (GK) dimension} of a locally finite graded $A$-module is
\begin{align*}
\GKdim (M) \coloneqq \limsup_{n \to \infty} \log_n(\dim_\Bbbk M_n).
\end{align*}
It is possible to define the GK dimension of a module in wider generality, but this definition is equivalent to the usual one by \cite[Proposition 6.6]{lenagan}. The GK dimension provides a sensible notion of dimension for noncommutative $\Bbbk$-algebras; in particular, if $A$ is commutative, then the GK dimension of $A$ coincides with the Krull dimension of $A$. \\
\indent We will be interested in a particular family of connected graded rings, which may be thought of as noncommutative analogues of polynomial rings:

\begin{defn} \label{ASregdef}
Let $A$ be a connected graded $\Bbbk$-algebra and write $\Bbbk = A/A_{\geqslant 1}$ for the trivial module. We say that $A$ is \emph{Artin--Schelter Gorenstein} (or AS Gorenstein) \emph{of dimension $d$} if:
\begin{enumerate}[{\normalfont (1)},topsep=1pt,itemsep=1pt,leftmargin=35pt]
\item $\idim {}_A A = \idim A_A = d < \infty$, and 
\item There is an isomorphism of graded right $A$-modules
\begin{align*}
\Ext^i(_{A} \Bbbk,{}_A A) \cong \left \{
\begin{array}{cl}
0 & \text{if } i \neq d \\
\Bbbk[\ell]_A & \text{if } i = d
\end{array}
\right .
\end{align*}
for some integer $\ell$, and a symmetric condition holds for $\Ext^i(\Bbbk_{A},A_A)$, with the same integer $\ell$. We call $\ell$ the \emph{Gorenstein parameter} of $A$.
\end{enumerate}
If, moreover
\begin{enumerate}[{\normalfont (1)},topsep=1pt,itemsep=1pt,leftmargin=35pt]
\item[(3)] $\gldim A = d$, and
\item[(4)] $A$ has finite GK dimension,
\end{enumerate}
then we say that $A$ is \emph{Artin--Schelter regular} (or AS regular) \emph{of dimension $d$}.
\end{defn}

As mentioned previously, AS regular algebras are often thought of as noncommutative analogues of polynomial rings, and have many good properties in common with them. For example, all known examples are noetherian domains, and it is conjectured that this is always the case. In particular, commutative AS regular algebras are polynomial rings \cite[Exercise 2.2.25]{bruns}. \\
\indent AS regular algebras are classified in dimensions 2 and 3 \cite{artinschelter,atv,stephenson1,stephenson2}. It is easy to state the classification in dimension 2 (where we will additionally impose the restriction that $A$ is generated in degree $1$). In this case, up to isomorphism, the only examples are
\begin{align} \label{eqn:TwoDimASReg}
\Bbbk_q[u,v] \coloneqq \frac{\Bbbk \langle u,v \rangle}{\langle vu - quv \rangle},\quad q \in \Bbbk^\times, \qquad \text{and} \qquad \Bbbk_J[u,v] \coloneqq \frac{\Bbbk \langle u,v \rangle}{\langle vu - uv - u^2 \rangle},
\end{align}
which are called the \emph{quantum plane} and \emph{Jordan plane}, respectively. We also note that the algebra
\begin{align*}
\frac{\Bbbk \langle x,y \rangle}{\langle x^2-\alpha y^2 \rangle}, \quad \alpha \in \Bbbk^\times,
\end{align*}
is isomorphic to $\Bbbk_{-1}[u,v]$ via the map $u \mapsto x + \sqrt{\alpha} y, \hspace{2pt} v \mapsto x - \sqrt{\alpha} y$, and hence is AS regular. \\
\indent In this paper, we will be interested in actions of a specific family of Hopf algebras (namely, group coactions) on AS regular algebras. We now state a number of definitions and results for general Hopf algebras and their actions, and give specific details for group coactions when appropriate. \\
\indent Given a Hopf algebra $H$, we write $\Delta$ for the coproduct, $\varepsilon$ for the counit, and $S$ for the antipode. We write elements in the image of the coproduct using Sweedler notation so, for $h \in H$,
\begin{align*}
\Delta(h) = \sum h_1 \otimes h_2.
\end{align*}
When $H$ is semisimple, as will be the case for us, it is necessarily finite-dimensional and the antipode satisfies $S^2 = \id_H$. \\
\indent If $H$ is a finite-dimensional Hopf algebra, then the dual algebra $H^* \coloneqq \Hom_\Bbbk(H,\Bbbk)$ has the structure of a Hopf algebra as well. We will be particularly interested in this construction when $H = \Bbbk G$ is the group algebra of a finite group $G$. In this case, $H^*$ has a $\Bbbk$-basis $\{ \phi_g \mid g \in G \}$, where $\phi_g : \Bbbk G \to G$ is dual to the element $g \in G$, and these elements form a set of primitive orthogonal idempotents for $H^*$. The coproduct, counit, and antipode for $H^*$ are defined as follows:
\begin{align*}
\Delta(\phi_g) = \sum_{\substack{x,y \in G \\ xy=g}} \phi_x \otimes \phi_y, \qquad \varepsilon(\phi_g) = \left \{ 
\begin{array}{cc}
1 & \text{if } g = \id_G, \\
0 & \text{otherwise},
\end{array}\right.
\qquad S(\phi_g) = \phi_{g^{-1}}.
\end{align*}
\indent In particular, we shall be interested in actions of Hopf algebras $H$ on $\Bbbk$-algebras $A$ which are compatible with the algebra structure. In this case, there is well-developed invariant theory for the action of $H$ on $A$.

\begin{defn}
Let $H$ be a Hopf algebra and $A$ be a $\Bbbk$-algebra. We say that \emph{$A$ is an $H$-module algebra} if $A$ is a left $H$-module which satisfies
\begin{align*}
h \cdot (ab) = \sum (h_1 \cdot a)(h_2 \cdot b) \qquad \text{and} \qquad h \cdot 1_A = \varepsilon(h) 1_A
\end{align*}
for all $h \in H$ and $a,b \in A$. In this case, the \emph{invariant ring} of the action of $H$ on $A$ is
\begin{align*}
A^H = \{ a \in A \mid h \cdot a = \varepsilon(h) a \text{ for all } h \in H \}.
\end{align*}
When $A$ is an $H$-module algebra, we can form the \emph{smash product} $A \hash H$. As an abelian group, this is simply $A \otimes_\Bbbk H$, and we write a simple tensor as $a \hash h$ where $a \in A$ and $h \in H$. The multiplication is given by 
\begin{align*}
(a \hash h)(b \hash k) = \sum a (h_1 \cdot b) \hash h_2 k,
\end{align*}
and extended linearly.
\end{defn}

We say that an algebra $A$ is \emph{graded by the group $G$}, or \emph{$G$-graded}, if there is a decomposition into abelian groups
\begin{align*}
A = \bigoplus_{g \in G} A_g
\end{align*}
which satisfies $A_g A_h \subseteq A_{gh}$. Necessarily $1_A \in A_{1_G}$. If $A$ is $G$-graded, a (left) $A$-module $M$ is \emph{$G$-graded} if there is a decomposition into abelian groups
\begin{align*}
M = \bigoplus_{g \in G} M_g
\end{align*}
which satisfies $A_g M_h \subseteq M_{gh}$. \\
\indent When $H = (\Bbbk G)^*$, it is well-known that an abelian group $M$ being an $H$-module is equivalent to $M$ being $G$-graded \cite[Example 1.6.7]{montgomerybook}), which is in turn equivalent to defining a coaction of $\Bbbk G$ on $M$. More precisely, given a $G$-grading on $M$, we obtain an action of $(\Bbbk G)^*$ by defining $\phi_g \cdot m \coloneqq m_g$, where $m = \sum_{g \in G} m_g$ is the decomposition of $m \in M$ into its $G$-homogeneous components. Similarly, we can use an action of $(\Bbbk G)^*$ on $M$ to define a $G$-grading on $M$. This correspondence also extends to $H$-module algebras. We will use the terms $(\Bbbk G)^*$-action, $G$-grading, and $G$-coaction interchangeably, and will swap between these different perspectives when convenient. We also note that, in this setting, the invariant ring satisfies $A^H = A_{1_G}$, i.e. it is the set elements of $A$ which have trivial $G$-degree. \\
\indent One of the most important properties of Hopf algebras is that the tensor product of $H$-modules is again an $H$-module. If $H = (\Bbbk G)^*$ and $V$ and $W$ are $H$-modules (i.e. $G$-graded), then so too is $V \otimes_\Bbbk W$, and if $v \in V$ and $w \in W$ are $G$-homogeneous, then
\begin{align*}
\deg_G (v \otimes w) = (\deg_G v)(\deg_G w).
\end{align*}
%When $H = (\Bbbk G)^*$, it is well-known that an algebra $A$ is an $H$-module algebra if and only if it is $G$-graded \cite[Example 1.6.7]{montgomerybook}). More precisely, given a $G$-grading, we obtain an action of $(\Bbbk G)^*$ by defining $\phi_g \cdot a \coloneqq a_g$, where $a = \sum_{g \in G} a_g$ is the decomposition of $a \in A$ into its $G$-homogeneous components. Similarly, we can use an action of $(\Bbbk G)^*$ on $A$ to define a $G$-grading on $A$. We will use this correspondence freely throughout the remainder of this paper. \\
\indent When working with actions of finite groups on $\Bbbk$-algebras, it is typical to assume that the corresponding representation of the group is faithful. The appropriate analogue of a faithful action for Hopf algebras is the following:

\begin{defn}
Let $V$ be a left $H$-module. We say that the action of $H$ on $V$ is \emph{inner-faithful} if $IV \neq 0$ for every nonzero Hopf ideal $I$ of $H$.
\end{defn}

This says that the action of $H$ does not factor through the action of one of its proper quotients. In particular, the action of a group algebra is inner-faithful if and only if it is faithful. \\
\indent Suppose that $A$ is an $\NN$-graded $H$-module algebra and $U$ is a $H$-submodule of $A$ which generates $A$ as an algebra. Then it is easy to see that the action of $H$ on $A$ is inner-faithful if and only if the action of $H$ on $U$ is inner-faithful. In particular, if that action of $H$ is homogeneous and $A$ is generated in degree $1$, then this happens if and only if the action of $H$ on $A_1$ is inner-faithful. \\
\indent When $H = (\Bbbk G)^*$ is the dual of a group algebra, it is easy to detect when the action is inner-faithful:

\begin{lem} \label{lem:innerfaithful}
Suppose that $A$ is a graded $\Bbbk$-algebra which is generated in degree $1$, $H = (\Bbbk G)^*$ for some finite group $G$, and $A$ is a $H$-module algebra. Then the action of $H$ on $A$ is inner-faithful if and only if
\begin{align*}
D \coloneqq\{ \deg_G x \mid x \in A_1 \text{ is } G\text{-homogeneous} \}
\end{align*}
is a generating set for $G$.
\end{lem}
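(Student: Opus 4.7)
The plan is to reduce inner-faithfulness on $A$ to inner-faithfulness on $A_1$, then to classify the Hopf ideals of $H = (\Bbbk G)^*$ in terms of subgroups of $G$, and finally to unwind what the annihilation of $A_1$ by such an ideal says about the grading.

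For the reduction, I would appeal directly to the observation stated just above the lemma: since $A$ is generated in degree $1$ and the $(\Bbbk G)^*$-action is homogeneous for the $\NN$-grading, inner-faithfulness of $H$ on $A$ is equivalent to inner-faithfulness on $A_1$. It therefore suffices to show that $H$ acts inner-faithfully on $A_1$ if and only if $D$ generates $G$.

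The key step is to classify the Hopf ideals of $(\Bbbk G)^*$. To each subgroup $N \leqslant G$ I would associate $I_N \coloneqq \mathrm{span}_\Bbbk \{ \phi_g \mid g \notin N \}$, and verify directly from the formulas for the product, coproduct, and antipode on the idempotent basis $\{\phi_g\}$ that $I_N$ is a Hopf ideal; the only slightly subtle point is that if $g \notin N$ and $xy = g$, then $x \notin N$ or $y \notin N$ (since $N$ is closed under products), which gives the coideal property. That every Hopf ideal arises this way follows from finite-dimensional Hopf duality, which identifies quotient Hopf algebras of $(\Bbbk G)^*$ with Hopf subalgebras of $\Bbbk G$, and the latter are precisely the group subalgebras $\Bbbk N$. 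Under the bijection $N \leftrightarrow I_N$, the nonzero Hopf ideals correspond exactly to proper subgroups $N < G$.

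Finally, since $\phi_g$ acts on $A_1$ as projection onto the graded component $(A_1)_g$, the condition $I_N \cdot A_1 = 0$ unwinds to $(A_1)_g = 0$ for every $g \notin N$, which is equivalent to $D \subseteq N$. Combining everything, the action is inner-faithful if and only if no proper subgroup of $G$ contains $D$, i.e. $\langle D \rangle = G$, as claimed. The main obstacle is the Hopf-ideal classification; although standard, it needs to be set up carefully, after which the remainder is essentially definition chasing.
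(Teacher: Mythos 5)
Your argument is correct, but it takes a genuinely different route from the paper's. Both proofs begin with the same reduction (inner-faithfulness on $A$ is equivalent to inner-faithfulness on $A_1$, by the observation preceding the lemma), but from there the paper invokes the Burnside-type theorem of Passman--Quinn cited as \cite[Corollary 10]{passmanburnside}: for a semisimple Hopf algebra, the action on a module $V$ is inner-faithful if and only if every irreducible $H$-module occurs as a summand of some tensor power $V^{\otimes n}$. Since the irreducibles of $(\Bbbk G)^*$ are the $\Bbbk\phi_g$ and the $G$-degrees occurring in $(A_1)^{\otimes n}$ are the $n$-fold products of elements of $D$, this reduces the statement to the fact that the products of elements of $D$ exhaust $G$ exactly when $D$ generates $G$ (using finiteness of $G$). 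You instead classify the nonzero Hopf ideals of $(\Bbbk G)^*$ outright as the spaces $I_N = \operatorname{span}_\Bbbk\{\phi_g \mid g \notin N\}$ for proper subgroups $N < G$, and then observe that $I_N \cdot A_1 = 0$ precisely when $D \subseteq N$, so inner-faithfulness amounts to no proper subgroup containing $D$. Your classification step is sound: the $I_N$ are visibly Hopf ideals from the formulas for $\Delta$, $\varepsilon$, $S$ on the idempotent basis, and the converse follows from finite-dimensional duality together with the standard fact that Hopf subalgebras of $\Bbbk G$ are exactly the $\Bbbk N$ (alternatively, one can avoid duality entirely: any ideal of the commutative semisimple algebra $(\Bbbk G)^*$ is spanned by a subset of the $\phi_g$, and the coideal, counit, and antipode conditions force the complementary subset of $G$ to be a subgroup). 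The trade-off is that your approach is more elementary and self-contained, at the cost of setting up the Hopf-ideal classification, whereas the paper's is shorter given the cited result and applies verbatim to inner-faithfulness questions for other semisimple Hopf algebras where no such transparent ideal classification is available.
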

\begin{proof}
The action on $A$ is inner-faithful if and only if the action on $A_1$ is inner-faithful which, by \cite[Corollary 10]{passmanburnside}, happens if and only each irreducible representation $\Bbbk \phi_g$ of $H$ is a summand of some $(A_1)^{\otimes_{\Bbbk} n_g}$ for some $n_g \in \NN$. Equivalently, for each $g \in G$, the tensor algebra $T_\Bbbk(A_1)$ contains an element of degree $g$, which happens if and only if $D$ is a generating set for $G$. 
\end{proof}

% \indent Let $R = \Bbbk[x_1, \dots, x_n]$. In commutative invariant theory, if $G \leqslant \Autgr(R) = \GL_n(\Bbbk)$, then the determinants of the elements of $G$ control many homological properties of the invariant ring $R^G$. For example, a result of Watanabe \cite[Theorem 1]{watanabe} says that if $G \leqslant \SL_n(\Bbbk)$, then $R^G$ is Gorenstein. Here, one may view the determinant as a map
% \begin{align*}
% \det : \Bbbk G \to \Bbbk,
% \end{align*}
% and then Watanabe's Theorem says that, if $\det = \varepsilon$, the counit of $\Bbbk G$, then $R^G$ is Gorenstein. \\
% \indent The above theory has been 
% An important quantity in noncommutative invariant theory is the homological determinant. This plays the same role as th
The so-called \emph{Auslander map} plays an important role in noncommutative invariant theory. This is the natural map
\begin{align*}
\gamma : A \hash H \to \End(A_{A^H}), \quad \gamma(a \hash h)(b) = a (h \cdot b).
\end{align*}
If this map is an isomorphism, then there are strong representation-theoretic results relating the algebras $H$, $A \hash H$, and $A^H$ \cite{ckwzii}, and so there is a considerable interest in determining when this is the case. By a recent result of Zhu, \cite[Theorem 0.3]{zhu}, it is known that this happens when $A$ is an AS regular algebra which satisfies a polynomial identity, and $H$ is a group algebra or the dual of one such that the action of $H$ on $A$ has \emph{trivial homological determinant}. We neglect to provide a definition of this term, but remark that this can be viewed as an analogue of requiring a group $G$ acting on a polynomial ring to satisfy $G \leqslant \SL(n,\Bbbk)$. The interested reader should consult \cite{jorgensen,kkzGor} for more information. \\ 
\indent Many of the results in this paper will be proved using quivers, so we recall some definitions.  A \emph{quiver} is a directed multigraph, and in this paper they will be assumed to have finitely many vertices and edges. For these definitions, we shall assume $Q$ has $n$ vertices labelled $\{1, \dots, n\}$. We can equip $Q$ with \emph{head} and \emph{tail} maps, which map an arrow $\alpha : i \to j$ to the vertex $j$ and the vertex $i$, respectively. A \emph{path (of length $\ell$}) in $Q$ is a sequence of arrows $p = \alpha_1 \dots \alpha_\ell$ such that $h(\alpha_i) = t(\alpha_{i+1})$ for $1 \leqslant i < \ell$ (in particular, we compose paths from left to right). We can extend the head and tail maps to paths in the obvious way. \\
\indent Given a finite quiver, we can form a $\Bbbk$-algebra $\Bbbk Q$ called the \emph{path algebra} of $Q$ as follows. As a vector space, $\Bbbk Q$ has a basis consisting of paths in the quiver (including the \emph{stationary paths} $e_i$ where we remain at vertex $i$), and multiplication of paths is given by concatenation, where defined: 
\begin{align*}
p \cdot q \coloneqq \left \{
\begin{array}{cl}
pq & \text{if } h(p) = t(q), \\
0 & \text{otherwise}.
\end{array}
\right.
\end{align*}
This is then extended linearly to all of $\Bbbk Q$. The elements $e_i$ are pairwise orthogonal idempotents, and the unit element in $\Bbbk Q$ is $1_{\Bbbk Q} = e_1 + e_2 + \dots + e_n$. The path algebra has a natural grading given by path length. \\
\indent If $\Bbbk Q$ is a path algebra, a \emph{relation} $\rho$ in $\Bbbk Q$ is an element of $(\Bbbk Q)_m$ for some $m \geqslant 2$, where every path in $\rho$ has the same head and tail, i.e.\ $\rho \in e_i (\Bbbk Q)_m e_j$ for some vertices $i$ and $j$. If $I$ is a two-sided ideal of $\Bbbk Q$ generated by relations, then we call $\Bbbk Q/I$ a \emph{path algebra with relations} or a \emph{quiver with relations}. \\
\indent By \cite[Theorem 4.8]{crawfordsuperpotentials}, it is possible to express $A \hash H$ as a quiver with relations. In particular, when $H = (\Bbbk G)^*$ and $A$ is AS regular of dimension $2$, we have the following result:

\begin{thm}[{\cite[Theorem 6.2]{crawfordsuperpotentials}}] \label{thm:QuiverAlgebra}
Suppose that $A = \Bbbk\langle u,v \rangle/\langle\rho(u,v) \rangle$ is a two-dimensional AS regular algebra and that $H = (\Bbbk G)^*$ acts on $A$ inner-faithfully and homogeneously. Moreover, assume that the action is $G$-homogeneous, and write
\begin{align*}
\deg_G u = a, \quad \deg_G v = b
\end{align*}
for some $a,b \in G$. Let $Q$ be the quiver whose vertices are labelled by the elements of $G$ and such that, for all $g \in G$, there are arrows
\begin{align*}
g \to a^{-1} g, \quad g \to b^{-1} g;
\end{align*}
additionally, decorate these arrows with $u$ and $v$, respectively. We call $Q$ the \emph{McKay quiver} for the action of $H$ on $A$. Finally, let $I = \langle \rho_g \mid g \in G \rangle$ be the ideal of $\Bbbk Q$ generated by the elements $\rho_g$ defined as follows: if we fix $g \in G$, then $\rho_g$ is the linear combination of paths of length $2$ from the vertex $g$ to the vertex $(\deg_G \rho(u,v))^{-1} g$ such that, if we replace an arrow in $\rho_g$ by its decoration, then we obtain $\rho(u,v)$. Then $A \hash H \cong \Lambda \coloneqq \Bbbk Q/I$ and $A^H \cong e_{1} \Lambda e_{1}$, where $e_{1}$ is the idempotent corresponding to the vertex labelled by the identity of $G$.
\end{thm}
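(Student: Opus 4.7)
The plan is to construct an explicit isomorphism $\psi : \Bbbk Q/I \to A \hash H$ using the primitive orthogonal idempotents $e_g := 1_A \hash \phi_g$ of $H \hookrightarrow A \hash H$. The $(\Bbbk G)^*$-action on $A$ is equivalent to the $G$-grading $A = \bigoplus_{g \in G} A_g$ via $\phi_x \cdot a = a_x$, so the smash product multiplication gives, for any $G$-homogeneous $a \in A_x$,
\[
e_g (a \hash 1) e_h = \delta_{h, x^{-1}g}\, a \hash \phi_h.
\]
In particular the $\{e_g\}_{g \in G}$ form a complete set of primitive orthogonal idempotents of $A \hash H$, and the nonzero pieces $e_g (A_1 \hash H) e_h$ of the Peirce decomposition in degree one are exactly the images of the generators $u \in A_a$ and $v \in A_b$, landing at $h = a^{-1}g$ and $h = b^{-1}g$ respectively. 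This identifies the arrows of $Q$ with a $\Bbbk$-basis of the degree-one piece of $A \hash H$.

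Next I would define $\psi : \Bbbk Q \to A \hash H$ by sending the stationary path at $g$ to $e_g$, the arrow $u : g \to a^{-1}g$ to $u \hash \phi_{a^{-1}g}$, and the arrow $v : g \to b^{-1}g$ to $v \hash \phi_{b^{-1}g}$. The computation above shows that $\psi$ respects heads and tails and orthogonality, so it extends to an algebra homomorphism. Surjectivity is immediate because $A \hash H$ is generated by $\{1 \hash \phi_g : g \in G\} \cup \{u \hash 1, v \hash 1\}$ (since $A$ is generated in degree one), and $u \hash 1 = \sum_{g \in G} u \hash \phi_g$ lies in the image of $\psi$, likewise for $v$. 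From the explicit definition of $\rho_g$ as the path lift of $\rho(u,v)$ starting at $g$, one checks directly that $\psi(\rho_g) = \rho(u,v) \hash \phi_{(\deg_G \rho)^{-1}g} = 0$, so $I \subseteq \ker \psi$ and $\psi$ descends to a surjection $\bar\psi : \Bbbk Q/I \twoheadrightarrow A \hash H$.

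The main obstacle is to show $\bar\psi$ is injective, which I would accomplish by a Hilbert-series comparison with respect to path-length / polynomial degree. On the right, $\dim_\Bbbk (A \hash H)_n = |G|(n+1)$ because $A$ is two-dimensional AS regular with $\dim_\Bbbk A_n = n+1$. On the left, the relations $\rho_g$ are quadratic, one at each vertex, with a single relation terminating at each vertex, and one may exploit that the two-dimensional AS regular algebras $A$ in \eqref{eqn:TwoDimASReg} are Koszul to lift a PBW-basis of $A$ to a basis of $\Bbbk Q/I$: for each ordered pair $(g,h)$ one produces $\dim_\Bbbk A_n$ basis elements of $e_g(\Bbbk Q/I)_n e_h$ (nonzero only when $h = (\deg_G A_n)^{-1} g$ intersects appropriately), matching the dimensions on the right. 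A clean way to package this is to observe that $\bar\psi$ is a surjection of $\NN$-graded algebras whose source and target have equal Hilbert series, forcing it to be an isomorphism.

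Finally, the identification $A^H \cong e_1 \Lambda e_1$ is formal: the invariant ring for the $(\Bbbk G)^*$-action on $A$ is $A_{1_G}$, and Step~1 with $g = h = 1_G$ shows that $a \mapsto a \hash \phi_{1_G}$ is a $\Bbbk$-algebra isomorphism $A_{1_G} \xrightarrow{\sim} e_1 (A \hash H) e_1$; transporting through $\bar\psi^{-1}$ yields $A^H \cong e_1 (\Bbbk Q/I) e_1$, completing the proof.
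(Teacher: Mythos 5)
The paper never proves this statement itself: it is imported verbatim from \cite{crawfordsuperpotentials}, so there is no internal proof to compare against. Your argument is essentially correct as a self-contained proof, but it is a different route from the structural one the cited source follows. There, one uses that $H=(\Bbbk G)^*$ is semisimple, so $-\hash H$ is exact, together with the canonical identification of $T(A_1)\hash (\Bbbk G)^*$ with the path algebra of the McKay quiver; applying $-\hash H$ to the presentation of $A$ by the single relation $\rho$ then yields $\Bbbk Q/I$ at once, with no dimension count. Your proof instead constructs the map by hand on Peirce components $e_g(A\hash H)e_h$ (your formula $e_g(a\hash 1)e_h=\delta_{h,x^{-1}g}\,a\hash\phi_h$ is correct, and it matches the paper's conventions for arrow directions and left-to-right path composition) and closes with a Hilbert-series comparison; this buys the explicit dictionary between arrows and elements $u\hash\phi_{a^{-1}g}$, $v\hash\phi_{b^{-1}g}$ that the rest of the paper actually uses, at the cost of having to justify an upper bound on $\dim_\Bbbk(\Bbbk Q/I)_n$.

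Two repairs are needed in that last step, neither fatal. First, the count ``for each ordered pair $(g,h)$ one produces $\dim_\Bbbk A_n$ basis elements'' overcounts by a factor of $|G|$; the correct claim is $\dim_\Bbbk e_g(\Bbbk Q/I)_n\leqslant \dim_\Bbbk A_n=n+1$ for each fixed source vertex $g$, which sums to $|G|(n+1)$. Second, Koszulity is not the operative fact (and you only need a spanning set, not a basis, before injectivity is known). What does the work is that every word in $u,v$ lifts uniquely to a path from any chosen vertex, and any consequence $l\rho r$ of $\rho$ lifts to $\hat{l}\,\rho_{g'}\,\hat{r}\in I$ with $g'$ the vertex reached after traversing $\hat{l}$; hence if $w=\sum_m c_m m$ in $A$ for a monomial basis $\{m\}$, the same identity holds between the lifted paths in $\Bbbk Q/I$, so the lifts of a monomial basis of $A_n$ span $e_g(\Bbbk Q/I)_n$. (This matters even in the paper's main case $\rho=u^2-v^2$, whose noncommutative Gr\"obner basis is not purely quadratic, so a naive PBW/Koszul invocation is shaky while the vertex-local lifting argument is not.) With that upper bound, surjectivity of $\bar\psi$ forces equality of Hilbert series and hence injectivity, and your final identification $A^H=A_{1_G}\cong e_1\Lambda e_1$ via $a\mapsto a\hash\phi_{1_G}$ is correct as stated.
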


\indent The McKay quiver mentioned above can be defined for the action of any Hopf algebra acting homogeneously on a graded algebra. %We remark that the decorations $u$ and $v$ in thew above the are simply a convenient tool which can be used to help identify the relations in $\Lambda$. 
Throughout this article, we will reserve the notation $\Lambda$ and $e_1$ for the algebra and vertex idempotent as in the statement of Theorem \ref{thm:QuiverAlgebra}. \\ %A careful analysis of the structure of the McKay quiver $Q$ is used to prove Theorem \ref{thm:intro3}. \\
\indent As mentioned previously, identifying when the Auslander map is an isomorphism is of particular interest. In general, it is difficult to determine when this is the case, but a result of Bao--He--Zhang \cite[Theorem 0.3]{bhz} provides a computational method which can be used in examples. The following is a restatement of their result that gives a first indication of how quivers will play a role in the proofs of results in this paper:

\begin{thm}[{\cite[Corollary 5.7]{crawfordsuperpotentials}}] \label{thm:FinDimLambda}
Assume the hypotheses of Theorem \ref{thm:QuiverAlgebra} and let $\Lambda$ be the algebra from this theorem. Then the Auslander map associated to the pair $(A,G)$ is an isomorphism if and only if $\Lambda/\langle e_{1} \rangle$ is finite dimensional.
\end{thm}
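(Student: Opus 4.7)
The plan is to reduce the isomorphism question for the Auslander map to a finite-dimensionality check on a quotient of $\Lambda$ by invoking the pertinency criterion of Bao--He--Zhang. I would start by recalling \cite[Theorem 0.3]{bhz}, which, under hypotheses satisfied in our setting ($A$ is AS regular and $H = (\Bbbk G)^*$ is semisimple), states that the Auslander map is an isomorphism if and only if the \emph{pertinency}
\begin{align*}
p(A,H) := \GKdim(A \hash H) - \GKdim\bigl((A \hash H)/\langle e \rangle\bigr)
\end{align*}
is at least $\GKdim A$, where $e \in A \hash H$ is the idempotent built from a normalised two-sided integral of $H$ and $1_A$. For $H = (\Bbbk G)^*$ the relevant integral is $\phi_{1_G}$, so $e = 1_A \hash \phi_{1_G}$.

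The key intermediate step is to identify $e$ with the vertex idempotent $e_1$ under the isomorphism $A \hash H \cong \Lambda$ of Theorem \ref{thm:QuiverAlgebra}. A direct Sweedler-style calculation, using that $\phi_{1_G}$ is the dual of $1_G \in G$ and that the action picks out $G$-homogeneous components, shows that $e(A \hash H)e$ is spanned by tensors $a \hash \phi_{1_G}$ with $a \in A_{1_G}$, giving $e(A \hash H)e \cong A^H$. This matches the description $e_1 \Lambda e_1 \cong A^H$ from Theorem \ref{thm:QuiverAlgebra}, and tracing the isomorphism of Theorem \ref{thm:QuiverAlgebra} confirms that $e$ is carried onto $e_1$.

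Granting this, the rest is formal. Since $H$ is finite-dimensional, $A \hash H$ is free of finite rank over $A$, so $\GKdim \Lambda = \GKdim A = 2$; the pertinency condition then becomes $\GKdim(\Lambda/\langle e_1 \rangle) \leqslant 0$. Being a finitely generated $\NN$-graded $\Bbbk$-algebra (as a quotient of $\Lambda = \Bbbk Q/I$), $\Lambda/\langle e_1 \rangle$ has GK dimension $\leqslant 0$ exactly when it is finite-dimensional, which is the desired equivalence. The main obstacle is the bookkeeping required to match up the idempotent appearing in \cite{bhz} with $e_1$ under the quiver presentation; once this correspondence between the $H$-action and the $G$-grading is pinned down, the argument reduces to an application of results already available in the literature.
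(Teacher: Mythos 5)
Your argument is correct and follows essentially the same route as the result the paper merely cites (\cite[Corollary 5.7]{crawfordsuperpotentials}, itself a restatement of \cite[Theorem 0.3]{bhz}): invoke the pertinency criterion, identify the integral idempotent $1_A \hash \phi_{1_G}$ with the vertex idempotent $e_1$ under the isomorphism $A \hash (\Bbbk G)^* \cong \Lambda$ of Theorem \ref{thm:QuiverAlgebra}, and use that a finitely generated graded algebra has GK dimension $0$ precisely when it is finite dimensional. The only caveat is that the Bao--He--Zhang criterion is $p(A,H) \geqslant 2$ rather than $p(A,H) \geqslant \GKdim A$ in general, but since $\GKdim A = 2$ here the two conditions coincide and your application is valid.
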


In many cases one can determine whether or not $\Lambda/\langle e_{1} \rangle$ is finite dimensional by using combinatorics on the quiver $Q$. We will use this method to prove Theorem \ref{thm:intro2}.

%%%%%%%%%%%%%%%%%%%%%%%%%%%%%%%%%%%%%%
%%%%%%%%%%%%%%%%%%%%%%%%%%%%%%%%%%%%%%
%%%%%%%%%%%%%%%%%%%%%%%%%%%%%%%%%%%%%%

\section{Classifying $G$-coactions}
We first classify the possible group coactions on two-dimensional AS regular algebras. If $G$ is abelian then $(\Bbbk G)^*$ is isomorphic to $\Bbbk G$ as Hopf algebras, and so the coaction is equivalent to the action by a finite abelian group; this case was considered in \cite{crawfordActions}. We therefore restrict attention to the case where $G$ is non-abelian. Additionally, \cite[Theorem 1.3]{etingof} shows that we may ignore the case where $A$ is a polynomial ring.

\begin{prop} \label{prop:Classifying}
Suppose that $A$ is a two-dimensional AS regular algebra which is not commutative and which is graded by a finite non-abelian group $G$ such that the corresponding action of $(\Bbbk G)^*$ is inner-faithful. Then $A \cong \Bbbk\langle u,v \rangle/\langle u^2-v^2 \rangle$ and $G = \langle a,b \mid \mathcal{R} \rangle$ for some set of relations $\mathcal{R}$, one of which is $a^2 = b^2$, and where the grading satisfies $\deg_G u = a$, $\deg_G v = b$.
\end{prop}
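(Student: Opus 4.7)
The plan is to choose a well-behaved basis of the degree-one piece $A_1$ and then analyze the defining relation of $A$ in that basis. Since the $G$-grading on $A$ is compatible with its $\NN$-grading, the piece $A_1$ is itself $G$-graded. Lemma~\ref{lem:innerfaithful} forces the $G$-degrees appearing in $A_1$ to generate $G$; if all of $A_1$ lay in a single $G$-degree $a$, then $G = \langle a \rangle$ would be cyclic and hence abelian, contrary to hypothesis. Since $\dim_\Bbbk A_1 = 2$, we may therefore pick $G$-homogeneous basis elements $u,v$ of $A_1$ with $\deg_G u = a \neq b = \deg_G v$.

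The sole defining quadratic relation of $A$ can then be written as $\rho = \alpha u^2 + \beta uv + \gamma vu + \delta v^2$, and the one-dimensional subspace $\Bbbk\rho \subseteq A_1 \otimes A_1$ is $G$-graded, hence $G$-homogeneous. Every monomial appearing in $\rho$ must therefore carry the same $G$-degree, chosen from $\{a^2, ab, ba, b^2\}$. The crux of the argument -- and what I view as the main obstacle -- is to enumerate which coincidences among these four degrees are compatible with both $a \neq b$ and $G$ being non-abelian. The equality $ab = ba$ would force $G = \langle a, b \rangle$ to be abelian, while each of $a^2 = ab$, $a^2 = ba$, $ab = b^2$, or $ba = b^2$ would collapse $a$ to $b$. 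The only admissible possibility is therefore $a^2 = b^2$ with $a^2, ab, ba$ pairwise distinct, forcing $\rho = \alpha u^2 + \delta v^2$.

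To conclude, both $\alpha$ and $\delta$ must be nonzero: otherwise $A$ would contain a nonzero nilpotent element and fail to match the dimension-two classification recalled in \eqref{eqn:TwoDimASReg}. Because $\Bbbk$ is algebraically closed, we may choose $\mu \in \Bbbk^\times$ with $\mu^2 = -\delta/\alpha$, and replacing $v$ by $\mu^{-1} v$ (a substitution that preserves $\deg_G v = b$) reduces the relation to $u^2 - v^2 = 0$. This yields $A \cong \Bbbk\langle u, v \rangle/\langle u^2 - v^2 \rangle$ with $\deg_G u = a$ and $\deg_G v = b$. Finally, $\{a,b\}$ generates $G$ by inner-faithfulness, and $a^2 = b^2$ holds in $G$, so $G = \langle a, b \mid \mathcal{R}\rangle$ for some set $\mathcal{R}$ of relations containing $a^2 = b^2$, completing the claim.
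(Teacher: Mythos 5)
Your argument is correct in outline and takes a genuinely different route from the paper. The paper starts from the explicit presentations in (\ref{eqn:TwoDimASReg}), writes a $G$-homogeneous basis in terms of the standard generators of the quantum or Jordan plane, and analyses the resulting coefficients case by case; you instead choose a $G$-homogeneous basis of $A_1$ abstractly, use the fact that the one-dimensional relation space is a $G$-graded subspace of $A_1 \otimes A_1$, and let cancellation in $G$ (together with $ab \neq ba$) kill the cross terms uniformly, invoking the classification only to exclude degenerate relations. This avoids the quantum/Jordan case split entirely and makes it transparent why $a^2=b^2$ is forced; the paper's computation, by contrast, also produces the explicit data ($q=-1$, $\alpha\beta=-1$) tying the homogeneous basis to the standard presentation.

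One step needs an extra sentence. From ``the only admissible coincidence among $a^2, ab, ba, b^2$ is $a^2=b^2$'' you conclude ``forcing $\rho = \alpha u^2 + \delta v^2$'', but homogeneity alone does not exclude $\rho$ being a nonzero scalar multiple of the single monomial $uv$ or of $vu$: a one-term relation is automatically $G$-homogeneous, so no coincidence of degrees is needed for it. These cases are dispatched by exactly the tool you already use: if $\rho \in \Bbbk uv$ (respectively $\Bbbk vu$), then $vu$ (respectively $uv$) is a nonzero element of $A$ whose square is zero, whereas the algebras in (\ref{eqn:TwoDimASReg}) are domains, so $A$ would not be two-dimensional AS regular. (The same remark makes explicit what your ``nonzero nilpotent'' argument for $\alpha\delta \neq 0$ rests on, namely that the quantum and Jordan planes are domains.) With that case added the proof is complete. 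A minor bookkeeping point at the end: in $A$ the relation gives $u^2 = -(\delta/\alpha)v^2 = (\mu v)^2$ with your choice $\mu^2 = -\delta/\alpha$, so the homogeneous generator to pair with $u$ is $\mu v$ rather than $\mu^{-1}v$ (equivalently, take $\mu^2 = -\alpha/\delta$); either way the degree $b$ is preserved and the conclusion stands.
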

\begin{proof}
Recall that, up to isomorphism, the two-dimensional AS regular algebras are the quantum plane and the Jordan plane from (\ref{eqn:TwoDimASReg}); in the former case, if $q=-1$ then this algebra is isomorphic to the algebra appearing in the statement of the lemma. \\
\indent Let $A$ be either the quantum plane or the Jordan plane, and suppose that $A$ is group graded by a finite non-abelian group $G$. Since $G$ is not abelian and the coaction is inner-faithful, by Lemma \ref{lem:innerfaithful} it is generated by two elements $a$ and $b$, say. Without loss of generality, we may assume that the $G$-grading satisfies
\begin{align*}
\deg_G (u+\alpha v) = a, \quad \deg_G(\beta u + v) = b.
\end{align*}
for some $\alpha,\beta \in \Bbbk$ such that $\alpha \beta \neq 1$. Writing $x \coloneqq u+\alpha v$ and $y \coloneqq \beta u + v$, we have 
\begin{align*}
u = \frac{1}{1-\alpha \beta} (x-\alpha y), \qquad v = \frac{1}{1-\alpha\beta}(-\beta x + y).
\end{align*}
\indent Now assume that $A = \Bbbk_J[u,v]$. The defining relation for this algebra is then given by
\begin{align*}
vu-uv-u^2 = \frac{1}{(1-\alpha \beta)^2} \left( -x^2 + (\alpha\beta + \alpha-1)xy + (-\alpha \beta + \alpha + 1)yx - \alpha^2 y^2 \right).
\end{align*}
Note that the coefficients of $xy$ and $yx$ cannot both be $0$, else we deduce that $\Bbbk_J[x,y]$ is isomorphic to $\Bbbk\langle x,y \rangle/\langle x^2 + \alpha^2 y^2 \rangle$, which is isomorphic to $\Bbbk_{-1}[u,v]$. Therefore at least one of $xy$ or $yx$ must appear with nonzero coefficient; suppose it is the former, with the latter case being almost identical. For the action of $(\Bbbk G)^*$ on $A$ to be homogeneous, this relation must be $G$-homogeneous, and so it follows that
\begin{align*}
a^2 = \deg_G x^2 = \deg_G xy = ab.
\end{align*}
Therefore $a=b$, so that $G$ is in fact abelian, contrary to our assumption. \\
\indent Now suppose that $A = \Bbbk_q[u,v]$ for some $q \in \Bbbk^\times$. Since we have assumed that $A$ is not commutative, we have $q \neq 1$. In this case, the defining relation becomes
\begin{align*}
vu-quv = \frac{1}{(1-\alpha \beta)^2} \left( (q \beta-\beta)x^2 + (\alpha \beta-q)xy + (1-q\alpha \beta)yx + (q\alpha-\alpha)y^2 \right).
\end{align*}
Since $q \neq 1$, the coefficients of $x^2$ and $y^2$ are both nonzero. As with the Jordan plane, if the coefficient of $xy$ or $yx$ is nonzero, then $G$ is abelian. Therefore we must have $\alpha \beta - q = 0 = 1-q \alpha \beta$, so that $q^2=1$ and hence $q=-1=\alpha \beta$. Finally, if we define 
\begin{align*}
X \coloneqq \sqrt{\beta} x, \qquad Y \coloneqq \sqrt{-\alpha} \gap y, 
\end{align*}
then the defining relation becomes
\begin{align*}
vu-quv = -\frac{1}{2}(-2 \beta x^2 - 2\alpha y^2) = X^2-Y^2,
\end{align*}
where $\deg_G X = a$ and $\deg_G Y = b$. Since the relation is $G$-homogeneous, we must also have
\begin{align*}
a^2 = \deg_G X^2 = \deg_G Y^2 = b^2, 
\end{align*}
and the result now follows.
\end{proof}

%%%%%%%%%%%%%%%%%%%%%%%%%%%%%%%%%%%%%%
%%%%%%%%%%%%%%%%%%%%%%%%%%%%%%%%%%%%%%
%%%%%%%%%%%%%%%%%%%%%%%%%%%%%%%%%%%%%%

\section{The McKay Quiver and Combinatorics} \label{sec:Quivers}
Before proving our main results, we provide an extended example to help demonstrate some of the combinatorics concerning the algebras $\Lambda$ from Theorem \ref{thm:QuiverAlgebra}. \\
\indent With Proposition \ref{prop:Classifying} in hand, we are interested in group coactions on the algebra
\begin{align*}
A = \frac{\Bbbk\langle u,v \rangle }{\langle u^2-v^2 \rangle}.
\end{align*}
In particular, $G$ must satisfy the hypotheses of Proposition \ref{prop:Classifying}, so in this section let
\begin{align*}
G = \langle a,b \mid a^2=b^2, a^4=b^4=(ab)^3=1 \rangle,
\end{align*}
and equip $A$ with a $G$-grading by declaring
\begin{align*}
\deg_G u = a \quad \text{and} \quad \deg_G v = b.
\end{align*}
The group $G$ is isomorphic to the binary dihedral group of order $12$ and, as a set, we have
\begin{align*}
G = \{ 1,a,a^2,a^3, b,ab,a^2b,a^3b, ba,aba,a^2ba,a^3ba \}. %\label{GElts} 
\end{align*}
Seeking to determine properties of $A^H$ and $A \hash H$, we construct the algebra $\Lambda$ from Theorem \ref{thm:QuiverAlgebra}. Following the recipe from this theorem, a straightforward computation verifies that the McKay quiver $Q$ is as shown below on the left, where the vertices are labelled by the elements of $G$ and the arrows are decorated with the elements $u$ and $v$. We also provide a relabelling on the right, to allow us to more easily talk about the elements of $\Lambda$:

\begin{equation*}
\begin{tikzpicture}[->,>=stealth,thick,scale=1]
\def \wiggle {1.6};

\node [minimum size=0.7cm,circle](1) at (0:\wiggle) {};
\node [minimum size=0.7cm,circle](2) at (60:\wiggle) {};
\node [minimum size=0.7cm,circle](3) at (120:\wiggle) {};
\node [minimum size=0.7cm,circle](4) at (180:\wiggle) {};
\node [minimum size=0.7cm,circle](5) at (240:\wiggle) {};
\node [minimum size=0.7cm,circle](6) at (300:\wiggle) {};

\node [minimum size=0.7cm,circle](7) at (0:2*\wiggle) {};
\node [minimum size=0.7cm,circle](8) at (60:2*\wiggle) {};
\node [minimum size=0.7cm,circle](9) at (120:2*\wiggle) {};
\node [minimum size=0.7cm,circle](10) at (180:2*\wiggle) {};
\node [minimum size=0.7cm,circle](11) at (240:2*\wiggle) {};
\node [minimum size=0.8cm,circle](12) at (300:2*\wiggle) {};

\node at (0-120:\wiggle) {$a^3b$};
\node at (60-120:\wiggle) {$aba$};
\node at (120-120:\wiggle) {$a^2ba$};
\node at (180-120:\wiggle) {$a$};
\node at (240-120:\wiggle) {$a^2$};
\node at (300-120:\wiggle) {$a^2b$};

\node at (0-120:2*\wiggle) {$ab$};
\node at (60-120:2*\wiggle) {$a^3ba$};
\node at (120-120:2*\wiggle) {$ba$};
\node at (180-120:2*\wiggle) {$a^3$};
\node at (240-120:2*\wiggle) {$1_G$};
\node at (300-120:2*\wiggle) {$b$};

%\draw (6) to node[pos=0.5,scale=0.8,right]{$v$} (3);

\draw (1) to (6);
\draw (6) to (5);
\draw (5) to (4);
\draw (4) to (3);
\draw (3) to (2);
\draw (2) to (1);

\node[scale=0.8] at (0-30:0.73*\wiggle) {$u$};
\node[scale=0.8] at (60-30:0.73*\wiggle) {$v$};
\node[scale=0.8] at (120-30:0.73*\wiggle) {$u$};
\node[scale=0.8] at (180-30:0.73*\wiggle) {$v$};
\node[scale=0.8] at (240-30:0.73*\wiggle) {$u$};
\node[scale=0.8] at (300-30:0.73*\wiggle) {$v$};

\draw (7) to  (12);
\draw (12) to (11);
\draw (11) to (10);
\draw (10) to (9);
\draw (9) to (8);
\draw (8) to (7);

\node[scale=0.8] at (0-30:1.87*\wiggle) {$u$};
\node[scale=0.8] at (60-30:1.87*\wiggle) {$v$};
\node[scale=0.8] at (120-30:1.87*\wiggle) {$u$};
\node[scale=0.8] at (180-30:1.87*\wiggle) {$v$};
\node[scale=0.8] at (240-30:1.87*\wiggle) {$u$};
\node[scale=0.8] at (300-30:1.87*\wiggle) {$v$};

\draw (11) to node[pos=0.35,scale=0.8,fill=white]{$v$} (6);
\draw (6) to node[pos=0.65,scale=0.8,fill=white]{$u$} (7);
\draw (7) to node[pos=0.35,scale=0.8,fill=white]{$v$} (2);
\draw (2) to node[pos=0.65,scale=0.8,fill=white]{$u$} (9);
\draw (9) to node[pos=0.35,scale=0.8,fill=white]{$v$} (4);
\draw (4) to node[pos=0.65,scale=0.8,fill=white]{$u$} (11);

\draw (12) to node[pos=0.35,scale=0.8,fill=white]{$u$} (1);
\draw (1) to node[pos=0.65,scale=0.8,fill=white]{$v$} (8);
\draw (8) to node[pos=0.35,scale=0.8,fill=white]{$u$} (3);
\draw (3) to node[pos=0.65,scale=0.8,fill=white]{$v$} (10);
\draw (10) to node[pos=0.35,scale=0.8,fill=white]{$u$} (5);
\draw (5) to node[pos=0.65,scale=0.8,fill=white]{$v$} (12);

\end{tikzpicture} 
\hspace{10pt}
\begin{tikzpicture}[->,>=stealth,thick,scale=1]
\def \wiggle {1.6};

\node [minimum size=0.7cm,circle](1) at (0:\wiggle) {};
\node [minimum size=0.7cm,circle](2) at (60:\wiggle) {};
\node [minimum size=0.7cm,circle](3) at (120:\wiggle) {};
\node [minimum size=0.7cm,circle](4) at (180:\wiggle) {};
\node [minimum size=0.7cm,circle](5) at (240:\wiggle) {};
\node [minimum size=0.7cm,circle](6) at (300:\wiggle) {};

\node [minimum size=0.7cm,circle](7) at (0:2*\wiggle) {};
\node [minimum size=0.7cm,circle](8) at (60:2*\wiggle) {};
\node [minimum size=0.7cm,circle](9) at (120:2*\wiggle) {};
\node [minimum size=0.7cm,circle](10) at (180:2*\wiggle) {};
\node [minimum size=0.7cm,circle](11) at (240:2*\wiggle) {};
\node [minimum size=0.8cm,circle](12) at (300:2*\wiggle) {};

% \node at (0:\wiggle) {$a^3b$};
% \node at (60:\wiggle) {$aba$};
% \node at (120:\wiggle) {$a^2ba$};
% \node at (180:\wiggle) {$a^2$};
% \node at (240:\wiggle) {$a^3$};
% \node at (300:\wiggle) {$a^2b$};

% \node at (0:2*\wiggle) {$ab$};
% \node at (60:2*\wiggle) {$a^3ba$};
% \node at (120:2*\wiggle) {$ba$};
% \node at (180:2*\wiggle) {$a^3$};
% \node at (240:2*\wiggle) {$1_G$};
% \node at (300:2*\wiggle) {$b$};

\node at (0-120:\wiggle) {$8$};
\node at (60-120:\wiggle) {$10$};
\node at (120-120:\wiggle) {$11$};
\node at (180-120:\wiggle) {$2$};
\node at (240-120:\wiggle) {$3$};
\node at (300-120:\wiggle) {$7$};

\node at (0-120:2*\wiggle) {$6$};
\node at (60-120:2*\wiggle) {$12$};
\node at (120-120:2*\wiggle) {$9$};
\node at (180-120:2*\wiggle) {$4$};
\node at (240-120:2*\wiggle) {$1$};
\node at (300-120:2*\wiggle) {$5$};

%\draw (6) to node[pos=0.5,scale=0.8,right]{$v$} (3);

\draw (1) to (6);
\draw (6) to (5);
\draw (5) to (4);
\draw (4) to (3);
\draw (3) to (2);
\draw (2) to (1);

\node[scale=0.8] at (0-30:0.73*\wiggle) {$n$};
\node[scale=0.8] at (60-30:0.73*\wiggle) {$F$};
\node[scale=0.8] at (120-30:0.73*\wiggle) {$\ell$};
\node[scale=0.8] at (180-30:0.73*\wiggle) {$D$};
\node[scale=0.8] at (240-30:0.73*\wiggle) {$s$};
\node[scale=0.8] at (300-30:0.73*\wiggle) {$K$};

\draw (7) to  (12);
\draw (12) to (11);
\draw (11) to (10);
\draw (10) to (9);
\draw (9) to (8);
\draw (8) to (7);

\node[scale=0.8] at (0-30:1.87*\wiggle) {$e$};
\node[scale=0.8] at (60-30:1.87*\wiggle) {$P$};
\node[scale=0.8] at (120-30:1.87*\wiggle) {$c$};
\node[scale=0.8] at (180-30:1.87*\wiggle) {$M$};
\node[scale=0.8] at (240-30:1.87*\wiggle) {$j$};
\node[scale=0.8] at (300-30:1.87*\wiggle) {$T$};

\draw (11) to node[pos=0.35,scale=0.8,fill=white]{$J$} (6);
\draw (6) to node[pos=0.65,scale=0.8,fill=white]{$p$} (7);
\draw (7) to node[pos=0.35,scale=0.8,fill=white]{$E$} (2);
\draw (2) to node[pos=0.65,scale=0.8,fill=white]{$m$} (9);
\draw (9) to node[pos=0.35,scale=0.8,fill=white]{$C$} (4);
\draw (4) to node[pos=0.65,scale=0.8,fill=white]{$t$} (11);

\draw (12) to node[pos=0.35,scale=0.8,fill=white]{$f$} (1);
\draw (1) to node[pos=0.65,scale=0.8,fill=white]{$N$} (8);
\draw (8) to node[pos=0.35,scale=0.8,fill=white]{$d$} (3);
\draw (3) to node[pos=0.65,scale=0.8,fill=white]{$L$} (10);
\draw (10) to node[pos=0.35,scale=0.8,fill=white]{$k$} (5);
\draw (5) to node[pos=0.65,scale=0.8,fill=white]{$J$} (12);

\end{tikzpicture}\label{McKayExample}
\end{equation*}

\indent To define the algebra $\Lambda$ to which $A \hash (\Bbbk G)^*$ is isomorphic, it remains to determine the relations in the quiver. Since the defining relation of $A$ is $u^2-v^2$, which satisfies $\deg_G (u^2-v^2) = a^2$, the relations in $\Lambda$ are precisely the paths in the quiver which start at a vertex $g$ and end at $a^{-2} g = a^2 g$ and which trace out the relation $u^2=v^2$. For example, by comparing the two different labellings, we see that $cd = CD$ and $d\ell = PE$ are both relations in $\Lambda$; there are twelve relations in total, one for each starting vertex (equivalently, each group element). \\
\indent While the above quiver is aesthetically pleasing, it is difficult to determine when two paths of length $\ell \geqslant 2$ are equivalent in $\Lambda$. It turns out that it is more convenient to visualise the McKay quiver as a suitable quotient of the quiver $\mathbb{Z} \mathbb{A}_\infty^\infty$. This is the infinite quiver whose vertices form a square lattice in the plane, and our convention will be that the arrows point south and east towards neighbouring vertices. In the running example, the McKay quiver may be viewed as follows, where we omit the arrow labels, but colour the arrows red or blue when the corresponding label is $u$ or $v$, respectively:

\begin{equation*}
\begin{tikzpicture}[->,>=stealth,thick,scale=1]
\def \wiggle {0.9};

\node at (0*\wiggle,0*\wiggle) {$1$};
\node at (1*\wiggle,0*\wiggle) {$4$};
\node at (2*\wiggle,0*\wiggle) {$9$};
\node at (3*\wiggle,0*\wiggle) {$12$};
\node at (4*\wiggle,0*\wiggle) {$6$};
\node at (5*\wiggle,0*\wiggle) {$5$};
\node at (6*\wiggle,0*\wiggle) {$1$};

\node at (0*\wiggle,-1*\wiggle) {$7$};
\node at (1*\wiggle,-1*\wiggle) {$3$};
\node at (2*\wiggle,-1*\wiggle) {$2$};
\node at (3*\wiggle,-1*\wiggle) {$11$};
\node at (4*\wiggle,-1*\wiggle) {$10$};
\node at (5*\wiggle,-1*\wiggle) {$8$};
\node at (6*\wiggle,-1*\wiggle) {$7$};

\node at (0*\wiggle,-2*\wiggle) {$6$};
\node at (1*\wiggle,-2*\wiggle) {$5$};
\node at (2*\wiggle,-2*\wiggle) {$1$};
\node at (3*\wiggle,-2*\wiggle) {$4$};
\node at (4*\wiggle,-2*\wiggle) {$9$};
\node at (5*\wiggle,-2*\wiggle) {$12$};
\node at (6*\wiggle,-2*\wiggle) {$6$};

\node at (0*\wiggle,-3*\wiggle) {$10$};
\node at (1*\wiggle,-3*\wiggle) {$8$};
\node at (2*\wiggle,-3*\wiggle) {$7$};
\node at (3*\wiggle,-3*\wiggle) {$3$};
\node at (4*\wiggle,-3*\wiggle) {$2$};
\node at (5*\wiggle,-3*\wiggle) {$11$};
\node at (6*\wiggle,-3*\wiggle) {$10$};

\node at (0*\wiggle,-4*\wiggle) {$9$};
\node at (1*\wiggle,-4*\wiggle) {$12$};
\node at (2*\wiggle,-4*\wiggle) {$6$};
\node at (3*\wiggle,-4*\wiggle) {$5$};
\node at (4*\wiggle,-4*\wiggle) {$1$};
\node at (5*\wiggle,-4*\wiggle) {$4$};
\node at (6*\wiggle,-4*\wiggle) {$9$};

\node at (0*\wiggle,-5*\wiggle) {$2$};
\node at (1*\wiggle,-5*\wiggle) {$11$};
\node at (2*\wiggle,-5*\wiggle) {$10$};
\node at (3*\wiggle,-5*\wiggle) {$8$};
\node at (4*\wiggle,-5*\wiggle) {$7$};
\node at (5*\wiggle,-5*\wiggle) {$3$};
\node at (6*\wiggle,-5*\wiggle) {$2$};

\node at (0*\wiggle,-6*\wiggle) {$1$};
\node at (1*\wiggle,-6*\wiggle) {$4$};
\node at (2*\wiggle,-6*\wiggle) {$9$};
\node at (3*\wiggle,-6*\wiggle) {$12$};
\node at (4*\wiggle,-6*\wiggle) {$6$};
\node at (5*\wiggle,-6*\wiggle) {$5$};
\node at (6*\wiggle,-6*\wiggle) {$1$};

%\node at (-1*\wiggle,1*\wiggle) {$100$};
\node at (0*\wiggle,1.3*\wiggle) {$\vdots$};
\node at (1*\wiggle,1.3*\wiggle) {$\vdots$};
\node at (2*\wiggle,1.3*\wiggle) {$\vdots$};
\node at (3*\wiggle,1.3*\wiggle) {$\vdots$};
\node at (4*\wiggle,1.3*\wiggle) {$\vdots$};
\node at (5*\wiggle,1.3*\wiggle) {$\vdots$};
\node at (6*\wiggle,1.3*\wiggle) {$\vdots$};
%\node (108) at (7*\wiggle,1*\wiggle) {$108$};

%\node (110) at (-1*\wiggle,-7*\wiggle) {$110$};
\node at (0*\wiggle,-7.05*\wiggle) {$\vdots$};
\node at (1*\wiggle,-7.05*\wiggle) {$\vdots$};
\node at (2*\wiggle,-7.05*\wiggle) {$\vdots$};
\node at (3*\wiggle,-7.05*\wiggle) {$\vdots$};
\node at (4*\wiggle,-7.05*\wiggle) {$\vdots$};
\node at (5*\wiggle,-7.05*\wiggle) {$\vdots$};
\node at (6*\wiggle,-7.05*\wiggle) {$\vdots$};
%\node (118) at (7*\wiggle,-7*\wiggle) {$118$};

\node at (-1.15*\wiggle,0*\wiggle) {$\cdots$};
\node at (-1.15*\wiggle,-1*\wiggle) {$\cdots$};
\node at (-1.15*\wiggle,-2*\wiggle) {$\cdots$};
\node at (-1.15*\wiggle,-3*\wiggle) {$\cdots$};
\node at (-1.15*\wiggle,-4*\wiggle) {$\cdots$};
\node at (-1.15*\wiggle,-5*\wiggle) {$\cdots$};
\node at (-1.15*\wiggle,-6*\wiggle) {$\cdots$};

\node at (7.15*\wiggle,0*\wiggle) {$\cdots$};
\node at (7.15*\wiggle,-1*\wiggle) {$\cdots$};
\node at (7.15*\wiggle,-2*\wiggle) {$\cdots$};
\node at (7.15*\wiggle,-3*\wiggle) {$\cdots$};
\node at (7.15*\wiggle,-4*\wiggle) {$\cdots$};
\node at (7.15*\wiggle,-5*\wiggle) {$\cdots$};
\node at (7.15*\wiggle,-6*\wiggle) {$\cdots$};

\node [minimum width=0.45cm,minimum height=0.45cm](1) at (0*\wiggle,0*\wiggle) {};
\node [minimum width=0.45cm,minimum height=0.45cm](2) at (1*\wiggle,0*\wiggle) {};
\node [minimum width=0.45cm,minimum height=0.45cm](3) at (2*\wiggle,0*\wiggle) {};
\node [minimum width=0.45cm,minimum height=0.45cm](4) at (3*\wiggle,0*\wiggle) {};
\node [minimum width=0.45cm,minimum height=0.45cm](5) at (4*\wiggle,0*\wiggle) {};
\node [minimum width=0.45cm,minimum height=0.45cm](6) at (5*\wiggle,0*\wiggle) {};
\node [minimum width=0.45cm,minimum height=0.45cm](7) at (6*\wiggle,0*\wiggle) {};

\node [minimum width=0.45cm,minimum height=0.45cm](8) at (0*\wiggle,-1*\wiggle) {};
\node [minimum width=0.45cm,minimum height=0.45cm](9) at (1*\wiggle,-1*\wiggle) {};
\node [minimum width=0.45cm,minimum height=0.45cm](10) at (2*\wiggle,-1*\wiggle) {};
\node [minimum width=0.45cm,minimum height=0.45cm](11) at (3*\wiggle,-1*\wiggle) {};
\node [minimum width=0.45cm,minimum height=0.45cm](12) at (4*\wiggle,-1*\wiggle) {};
\node [minimum width=0.45cm,minimum height=0.45cm](13) at (5*\wiggle,-1*\wiggle) {};
\node [minimum width=0.45cm,minimum height=0.45cm](14) at (6*\wiggle,-1*\wiggle) {};

\node [minimum width=0.45cm,minimum height=0.45cm](15) at (0*\wiggle,-2*\wiggle) {};
\node [minimum width=0.45cm,minimum height=0.45cm](16) at (1*\wiggle,-2*\wiggle) {};
\node [minimum width=0.45cm,minimum height=0.45cm](17) at (2*\wiggle,-2*\wiggle) {};
\node [minimum width=0.45cm,minimum height=0.45cm](18) at (3*\wiggle,-2*\wiggle) {};
\node [minimum width=0.45cm,minimum height=0.45cm](19) at (4*\wiggle,-2*\wiggle) {};
\node [minimum width=0.45cm,minimum height=0.45cm](20) at (5*\wiggle,-2*\wiggle) {};
\node [minimum width=0.45cm,minimum height=0.45cm](21) at (6*\wiggle,-2*\wiggle) {};

\node [minimum width=0.45cm,minimum height=0.45cm](22) at (0*\wiggle,-3*\wiggle) {};
\node [minimum width=0.45cm,minimum height=0.45cm](23) at (1*\wiggle,-3*\wiggle) {};
\node [minimum width=0.45cm,minimum height=0.45cm](24) at (2*\wiggle,-3*\wiggle) {};
\node [minimum width=0.45cm,minimum height=0.45cm](25) at (3*\wiggle,-3*\wiggle) {};
\node [minimum width=0.45cm,minimum height=0.45cm](26) at (4*\wiggle,-3*\wiggle) {};
\node [minimum width=0.45cm,minimum height=0.45cm](27) at (5*\wiggle,-3*\wiggle) {};
\node [minimum width=0.45cm,minimum height=0.45cm](28) at (6*\wiggle,-3*\wiggle) {};

\node [minimum width=0.45cm,minimum height=0.45cm](29) at (0*\wiggle,-4*\wiggle) {};
\node [minimum width=0.45cm,minimum height=0.45cm](30) at (1*\wiggle,-4*\wiggle) {};
\node [minimum width=0.45cm,minimum height=0.45cm](31) at (2*\wiggle,-4*\wiggle) {};
\node [minimum width=0.45cm,minimum height=0.45cm](32) at (3*\wiggle,-4*\wiggle) {};
\node [minimum width=0.45cm,minimum height=0.45cm](33) at (4*\wiggle,-4*\wiggle) {};
\node [minimum width=0.45cm,minimum height=0.45cm](34) at (5*\wiggle,-4*\wiggle) {};
\node [minimum width=0.45cm,minimum height=0.45cm](35) at (6*\wiggle,-4*\wiggle) {};

\node [minimum width=0.45cm,minimum height=0.45cm](36) at (0*\wiggle,-5*\wiggle) {};
\node [minimum width=0.45cm,minimum height=0.45cm](37) at (1*\wiggle,-5*\wiggle) {};
\node [minimum width=0.45cm,minimum height=0.45cm](38) at (2*\wiggle,-5*\wiggle) {};
\node [minimum width=0.45cm,minimum height=0.45cm](39) at (3*\wiggle,-5*\wiggle) {};
\node [minimum width=0.45cm,minimum height=0.45cm](40) at (4*\wiggle,-5*\wiggle) {};
\node [minimum width=0.45cm,minimum height=0.45cm](41) at (5*\wiggle,-5*\wiggle) {};
\node [minimum width=0.45cm,minimum height=0.45cm](42) at (6*\wiggle,-5*\wiggle) {};

\node [minimum width=0.45cm,minimum height=0.45cm](43) at (0*\wiggle,-6*\wiggle) {};
\node [minimum width=0.45cm,minimum height=0.45cm](44) at (1*\wiggle,-6*\wiggle) {};
\node [minimum width=0.45cm,minimum height=0.45cm](45) at (2*\wiggle,-6*\wiggle) {};
\node [minimum width=0.45cm,minimum height=0.45cm](46) at (3*\wiggle,-6*\wiggle) {};
\node [minimum width=0.45cm,minimum height=0.45cm](47) at (4*\wiggle,-6*\wiggle) {};
\node [minimum width=0.45cm,minimum height=0.45cm](48) at (5*\wiggle,-6*\wiggle) {};
\node [minimum width=0.45cm,minimum height=0.45cm](49) at (6*\wiggle,-6*\wiggle) {};

%\node at (-1*\wiggle,1*\wiggle) {$100$};
\node [minimum width=0.45cm,minimum height=0.45cm](101) at (0*\wiggle,1*\wiggle) {};
\node [minimum width=0.45cm,minimum height=0.45cm](102) at (1*\wiggle,1*\wiggle) {};
\node [minimum width=0.45cm,minimum height=0.45cm](103) at (2*\wiggle,1*\wiggle) {};
\node [minimum width=0.45cm,minimum height=0.45cm](104) at (3*\wiggle,1*\wiggle) {};
\node [minimum width=0.45cm,minimum height=0.45cm](105) at (4*\wiggle,1*\wiggle) {};
\node [minimum width=0.45cm,minimum height=0.45cm](106) at (5*\wiggle,1*\wiggle) {};
\node [minimum width=0.45cm,minimum height=0.45cm](107) at (6*\wiggle,1*\wiggle) {};
%\node (108) at (7*\wiggle,1*\wiggle) {$108$};

%\node (110) at (-1*\wiggle,-7*\wiggle) {$110$};
\node [minimum width=0.45cm,minimum height=0.45cm](111) at (0*\wiggle,-7*\wiggle) {};
\node [minimum width=0.45cm,minimum height=0.45cm](112) at (1*\wiggle,-7*\wiggle) {};
\node [minimum width=0.45cm,minimum height=0.45cm](113) at (2*\wiggle,-7*\wiggle) {};
\node [minimum width=0.45cm,minimum height=0.45cm](114) at (3*\wiggle,-7*\wiggle) {};
\node [minimum width=0.45cm,minimum height=0.45cm](115) at (4*\wiggle,-7*\wiggle) {};
\node [minimum width=0.45cm,minimum height=0.45cm](116) at (5*\wiggle,-7*\wiggle) {};
\node [minimum width=0.45cm,minimum height=0.45cm](117) at (6*\wiggle,-7*\wiggle) {};
%\node (118) at (7*\wiggle,-7*\wiggle) {$118$};

\node [minimum width=0.45cm,minimum height=0.45cm](121) at (-1*\wiggle,0*\wiggle) {};
\node [minimum width=0.45cm,minimum height=0.45cm](122) at (-1*\wiggle,-1*\wiggle) {};
\node [minimum width=0.45cm,minimum height=0.45cm](123) at (-1*\wiggle,-2*\wiggle) {};
\node [minimum width=0.45cm,minimum height=0.45cm](124) at (-1*\wiggle,-3*\wiggle) {};
\node [minimum width=0.45cm,minimum height=0.45cm](125) at (-1*\wiggle,-4*\wiggle) {};
\node [minimum width=0.45cm,minimum height=0.45cm](126) at (-1*\wiggle,-5*\wiggle) {};
\node [minimum width=0.45cm,minimum height=0.45cm](127) at (-1*\wiggle,-6*\wiggle) {};

\node [minimum width=0.45cm,minimum height=0.45cm](131) at (7*\wiggle,0*\wiggle) {};
\node [minimum width=0.45cm,minimum height=0.45cm](132) at (7*\wiggle,-1*\wiggle) {};
\node [minimum width=0.45cm,minimum height=0.45cm](133) at (7*\wiggle,-2*\wiggle) {};
\node [minimum width=0.45cm,minimum height=0.45cm](134) at (7*\wiggle,-3*\wiggle) {};
\node [minimum width=0.45cm,minimum height=0.45cm](135) at (7*\wiggle,-4*\wiggle) {};
\node [minimum width=0.45cm,minimum height=0.45cm](136) at (7*\wiggle,-5*\wiggle) {};
\node [minimum width=0.45cm,minimum height=0.45cm](137) at (7*\wiggle,-6*\wiggle) {};

\draw[red] (101) -- (1);
\draw[blue] (102) -- (2);
\draw[red] (103) -- (3);
\draw[blue] (104) -- (4);
\draw[red] (105) -- (5);
\draw[blue] (106) -- (6);
\draw[red] (107) -- (7);

\draw[blue] (43) -- (111);
\draw[red] (44) -- (112);
\draw[blue] (45) -- (113);
\draw[red] (46) -- (114);
\draw[blue] (47) -- (115);
\draw[red] (48) -- (116);
\draw[blue] (49) -- (117);

\draw[blue] (121) -- (1);
\draw[red] (122) -- (8);
\draw[blue] (123) -- (15);
\draw[red] (124) -- (22);
\draw[blue] (125) -- (29);
\draw[red] (126) -- (36);
\draw[blue] (127) -- (43);

\draw[red] (7) -- (131);
\draw[blue] (14) -- (132);
\draw[red] (21) -- (133);
\draw[blue] (28) -- (134);
\draw[red] (35) -- (135);
\draw[blue] (42) -- (136);
\draw[red] (49) -- (137);

\draw[red] (1) -- (2);
\draw[blue] (2) -- (3);
\draw[red] (3) -- (4);
\draw[blue] (4) -- (5);
\draw[red] (5) -- (6);
\draw[blue] (6) -- (7);

\draw[blue] (8) -- (9);
\draw[red] (9) -- (10);
\draw[blue] (10) -- (11);
\draw[red] (11) -- (12);
\draw[blue] (12) -- (13);
\draw[red] (13) -- (14);

\draw[red] (15) -- (16);
\draw[blue] (16) -- (17);
\draw[red] (17) -- (18);
\draw[blue] (18) -- (19);
\draw[red] (19) -- (20);
\draw[blue] (20) -- (21);

\draw[blue] (22) -- (23);
\draw[red] (23) -- (24);
\draw[blue] (24) -- (25);
\draw[red] (25) -- (26);
\draw[blue] (26) -- (27);
\draw[red] (27) -- (28);

\draw[red] (29) -- (30);
\draw[blue] (30) -- (31);
\draw[red] (31) -- (32);
\draw[blue] (32) -- (33);
\draw[red] (33) -- (34);
\draw[blue] (34) -- (35);

\draw[blue] (36) -- (37);
\draw[red] (37) -- (38);
\draw[blue] (38) -- (39);
\draw[red] (39) -- (40);
\draw[blue] (40) -- (41);
\draw[red] (41) -- (42);

\draw[red] (43) -- (44);
\draw[blue] (44) -- (45);
\draw[red] (45) -- (46);
\draw[blue] (46) -- (47);
\draw[red] (47) -- (48);
\draw[blue] (48) -- (49);

\draw[blue] (1) -- (8);
\draw[red] (2) -- (9);
\draw[blue] (3) -- (10);
\draw[red] (4) -- (11);
\draw[blue] (5) -- (12);
\draw[red] (6) -- (13);
\draw[blue] (7) -- (14);

\draw[red] (8) -- (15);
\draw[blue] (9) -- (16);
\draw[red] (10) -- (17);
\draw[blue] (11) -- (18);
\draw[red] (12) -- (19);
\draw[blue] (13) -- (20);
\draw[red] (14) -- (21);

\draw[blue] (15) -- (22);
\draw[red] (16) -- (23);
\draw[blue] (17) -- (24);
\draw[red] (18) -- (25);
\draw[blue] (19) -- (26);
\draw[red] (20) -- (27);
\draw[blue] (21) -- (28);

\draw[red] (22) -- (29);
\draw[blue] (23) -- (30);
\draw[red] (24) -- (31);
\draw[blue] (25) -- (32);
\draw[red] (26) -- (33);
\draw[blue] (27) -- (34);
\draw[red] (28) -- (35);

\draw[blue] (29) -- (36);
\draw[red] (30) -- (37);
\draw[blue] (31) -- (38);
\draw[red] (32) -- (39);
\draw[blue] (33) -- (40);
\draw[red] (34) -- (41);
\draw[blue] (35) -- (42);

\draw[red] (36) -- (43);
\draw[blue] (37) -- (44);
\draw[red] (38) -- (45);
\draw[blue] (39) -- (46);
\draw[red] (40) -- (47);
\draw[blue] (41) -- (48);
\draw[red] (42) -- (49);

\end{tikzpicture}
\end{equation*}

\noindent Here, vertices sharing a number are identified, so we need only show a small portion of the quiver $\mathbb{Z} \mathbb{A}_\infty^\infty$ which then repeats infinitely. We call this the \emph{lattice presentation} of the McKay quiver. \\
\indent In the portion of infinite quiver shown above, observe that opposite sides can be identified, so it suffices to present just the square grid and omit the flanking arrows. Provided that $A$ and $G$ satisfy the hypotheses of Proposition \ref{prop:Classifying}, it is always possible to present the McKay quiver in this way. If we additionally assume that the top left vertex is labelled by $1_G$, then We call this the \emph{toroidal presentation} of the McKay quiver. \\
\indent Observe that, in each row and column, the arrows alternate between red and blue, corresponding to the fact that the decorations alternate between $u$ and $v$. In our presentation of the toroidal McKay quiver, our convention will be that the first arrow in the top row is decorated by $u$, so that the first arrow in the left hand column is decorated by $v$. Equivalently, this says that the vertices in the top row of the toroidal McKay quiver are labelled by (from left to right) $1_G$, $a^{-1}$, $b^{-1} a^{-1}$, $a^{-1} b^{-1} a^{-1}, \dots, 1_G$. Working with the lattice presentation of the McKay quiver, the relations in $\Lambda$ say that the two paths of length two from a vertex to the vertex immediately south-east are equal; by repeatedly applying these relations, it follows that there is precisely one nonzero path in $\Lambda$ from a vertex to any vertex which is south and/or east of it. \\
\indent As an example, the paths $cPeTjM$ and $CtJpEm$ in $\Lambda$ are distinct because, despite both having common start and terminal vertices and having the same length, they visibly correspond to different paths in the lattice presentation of the McKay quiver. On the other hand, the paths $cPEm$ and $CtjM$ are equal in $\Lambda$, which again is clear if we use this presentation. Neither of these facts are obvious using our original presentation of $\Lambda$. \\
\indent The lattice and toroidal presentations make it particularly easy to perform computations in $\Lambda$. For example, using this description, it is straightforward to determine the invariant ring $A^H = A_{1_G}$. By Theorem \ref{thm:QuiverAlgebra}, we have $A^H \cong e_{1} \Lambda e_{1}$, so it suffices to look at paths from the vertex $1$ to itself in the quiver. As noted above, there are distinct paths $aPeTjM$ and $CtJpEm$, as well as a path of length four given by $cd\ell m$ (and five other equivalent paths). From the toroidal presentation of $\Lambda$, one can show that any other path from the vertex $1$ to itself can be written as a product of these three paths. Calling these paths $x$, $y$, and $z$, respectively, it is easy to check that these elements commute and satisfy $xy = z^3$; with a little more work, we obtain
\begin{align*}
A^H \cong \frac{\Bbbk[x,y,z]}{\langle xy-z^3 \rangle},
\end{align*}
which is (the coordinate ring of) an $\mathbb{A}_2$ singularity. Working with $A$ rather than $\Lambda$, the elements $x$, $y$, and $z$ correspond to $(uv)^3, (vu)^3$, and $u^4$, respectively, which also satisfy the relation $xy=z^3$.

\subsection{Dimension counting via quiver combinatorics} \label{sec:DimensionCounting}
We now describe a method to count the dimensions of various subspaces of the algebra $\Lambda$ associated to the group coaction of $G$ on $A$. This is most easily described by continuing with our previous example. The methods described in this section are specific examples of a technique described in \cite[Section 4]{gl2c}. \\
\indent Suppose we are interested in determining the number of paths in $\Lambda$ from a vertex $i$ to a vertex $j$ of a certain length $\ell$; equivalently, we are interested in $\dim_\Bbbk (e_i \Lambda e_j)_\ell$. Now, as observed earlier, the relations in $\Lambda$ tell us that there is exactly one path from a vertex to any vertex south and/or east of it in the lattice presentation of the McKay quiver. Seeking to compute $\dim_\Bbbk (e_2 \Lambda e_j)_\ell$, for example, consider the following diagram, where one should ignore the shading and the red numbers for now:

\begin{equation*}
\begin{tikzpicture}[-,thick,scale=1]
\def \wiggle {0.7};

\node[scale=0.7,color=red] at (0.6*\wiggle,0.9*\wiggle) {$0$};
\node[scale=0.7,color=red] at (1.6*\wiggle,0.9*\wiggle) {$1$};
\node[scale=0.7,color=red] at (2.6*\wiggle,0.9*\wiggle) {$2$};
\node[scale=0.7,color=red] at (3.6*\wiggle,0.9*\wiggle) {$3$};
\node[scale=0.7,color=red] at (4.6*\wiggle,0.9*\wiggle) {$4$};
\node[scale=0.7,color=red] at (5.6*\wiggle,0.9*\wiggle) {$5$};
\node[scale=0.7,color=red] at (6.6*\wiggle,0.9*\wiggle) {$6$};
\node[scale=0.7,color=red] at (7.6*\wiggle,0.9*\wiggle) {$7$};
\node[scale=0.7,color=red] at (8.6*\wiggle,0.9*\wiggle) {$8$};
\node[scale=0.7,color=red] at (9.6*\wiggle,0.9*\wiggle) {$9$};
\node[scale=0.7,color=red] at (10.6*\wiggle,0.9*\wiggle) {$10$};

\coordinate (98) at (0*\wiggle,0.5*\wiggle);
\coordinate (99) at (-0.5*\wiggle,-0*\wiggle);
\draw[dashed] (98) -- (99);

\coordinate (100) at (1*\wiggle,0.5*\wiggle);
\coordinate (101) at (-0.5*\wiggle,-1*\wiggle);
\draw[dashed] (100) -- (101);

\coordinate (102) at (2*\wiggle,0.5*\wiggle);
\coordinate (103) at (-0.5*\wiggle,-2*\wiggle);
\draw[dashed] (102) -- (103);

\coordinate (104) at (3*\wiggle,0.5*\wiggle);
\coordinate (105) at (-0.5*\wiggle,-3*\wiggle);
\draw[dashed] (104) -- (105);

\coordinate (106) at (4*\wiggle,0.5*\wiggle);
\coordinate (107) at (-0.5*\wiggle,-4*\wiggle);
\draw[dashed] (106) -- (107);

\coordinate (108) at (5*\wiggle,0.5*\wiggle);
\coordinate (109) at (-0.5*\wiggle,-5*\wiggle);
\draw[dashed] (108) -- (109);

\coordinate (110) at (6*\wiggle,0.5*\wiggle);
\coordinate (111) at (-0.5*\wiggle,-6*\wiggle);
\draw[dashed] (110) -- (111);

\coordinate (112) at (7*\wiggle,0.5*\wiggle);
\coordinate (113) at (-0.5*\wiggle,-7*\wiggle);
\draw[dashed] (112) -- (113);

\coordinate (114) at (8*\wiggle,0.5*\wiggle);
\coordinate (115) at (-0.5*\wiggle,-8*\wiggle);
\draw[dashed] (114) -- (115);

\coordinate (116) at (9*\wiggle,0.5*\wiggle);
\coordinate (117) at (-0.5*\wiggle,-9*\wiggle);
\draw[dashed] (116) -- (117);

\coordinate (118) at (10*\wiggle,0.5*\wiggle);
\coordinate (119) at (-0.5*\wiggle,-10*\wiggle);
\draw[dashed] (118) -- (119);

\coordinate (120) at (11*\wiggle,0.5*\wiggle);
\coordinate (121) at (-0.5*\wiggle,-11*\wiggle);
\draw[dashed] (120) -- (121);

%\coordinate (122) at (12*\wiggle,0.5*\wiggle);
%\coordinate (123) at (-0.5*\wiggle,-12*\wiggle);
%\draw[dashed] (122) -- (123);

%\path[fill=mygray] (120) to (121) to (123) to (122) to (120);

% \path[fill=mygray] (98) to (99) to (-0.5*\wiggle,0.5*\wiggle) to (98);
% \path[fill=mygray] (100) to (101) to (103) to (102) to (100);
% \path[fill=mygray] (104) to (105) to (107) to (106) to (104);
% \path[fill=mygray] (108) to (109) to (111) to (110) to (108);
% \path[fill=mygray] (112) to (113) to (115) to (114) to (112);
% \path[fill=mygray] (116) to (117) to (119) to (118) to (116);

\path[fill=mygray] (98) to (99) to (101) to (100) to (98);
\path[fill=mygray] (102) to (103) to (105) to (104) to (102);
\path[fill=mygray] (106) to (107) to (109) to (108) to (106);
\path[fill=mygray] (110) to (111) to (113) to (112) to (110);
\path[fill=mygray] (114) to (115) to (117) to (116) to (114);
\path[fill=mygray] (118) to (119) to (121) to (120) to (118);

\node[scale=0.901] at (0*\wiggle,0*\wiggle) {$2$};
\node[scale=0.901] at (1*\wiggle,0*\wiggle) {$11$};
\node[scale=0.901] at (2*\wiggle,0*\wiggle) {$10$};
\node[scale=0.901] at (3*\wiggle,0*\wiggle) {$8$};
\node[scale=0.901] at (4*\wiggle,0*\wiggle) {$7$};
\node[scale=0.901] at (5*\wiggle,0*\wiggle) {$3$};
\node[scale=0.901] at (6*\wiggle,0*\wiggle) {$2$};
\node[scale=0.901] at (7*\wiggle,0*\wiggle) {$11$};
\node[scale=0.901] at (8*\wiggle,0*\wiggle) {$10$};
\node[scale=0.901] at (9*\wiggle,0*\wiggle) {$8$};
\node[scale=0.901] at (10*\wiggle,0*\wiggle) {$7$};
%\node[scale=0.901] at (11*\wiggle,0*\wiggle) {$3$};

\node[scale=0.901] at (0*\wiggle,-1*\wiggle) {$1$};
\node[scale=0.901] at (1*\wiggle,-1*\wiggle) {$4$};
\node[scale=0.901] at (2*\wiggle,-1*\wiggle) {$9$};
\node[scale=0.901] at (3*\wiggle,-1*\wiggle) {$12$};
\node[scale=0.901] at (4*\wiggle,-1*\wiggle) {$6$};
\node[scale=0.901] at (5*\wiggle,-1*\wiggle) {$5$};
\node[scale=0.901] at (6*\wiggle,-1*\wiggle) {$1$};
\node[scale=0.901] at (7*\wiggle,-1*\wiggle) {$4$};
\node[scale=0.901] at (8*\wiggle,-1*\wiggle) {$9$};
\node[scale=0.901] at (9*\wiggle,-1*\wiggle) {$12$};
\node[scale=0.901] at (10*\wiggle,-1*\wiggle) {$6$};
%\node[scale=0.901] at (11*\wiggle,-1*\wiggle) {$5$};

\node[scale=0.901] at (0*\wiggle,-2*\wiggle) {$7$};
\node[scale=0.901] at (1*\wiggle,-2*\wiggle) {$3$};
\node[scale=0.901] at (2*\wiggle,-2*\wiggle) {$2$};
\node[scale=0.901] at (3*\wiggle,-2*\wiggle) {$11$};
\node[scale=0.901] at (4*\wiggle,-2*\wiggle) {$10$};
\node[scale=0.901] at (5*\wiggle,-2*\wiggle) {$8$};
\node[scale=0.901] at (6*\wiggle,-2*\wiggle) {$7$};
\node[scale=0.901] at (7*\wiggle,-2*\wiggle) {$3$};
\node[scale=0.901] at (8*\wiggle,-2*\wiggle) {$2$};
\node[scale=0.901] at (9*\wiggle,-2*\wiggle) {$11$};
\node[scale=0.901] at (10*\wiggle,-2*\wiggle) {$10$};
%\node[scale=0.901] at (11*\wiggle,-2*\wiggle) {$8$};

\node[scale=0.901] at (0*\wiggle,-3*\wiggle) {$6$};
\node[scale=0.901] at (1*\wiggle,-3*\wiggle) {$5$};
\node[scale=0.901] at (2*\wiggle,-3*\wiggle) {$1$};
\node[scale=0.901] at (3*\wiggle,-3*\wiggle) {$4$};
\node[scale=0.901] at (4*\wiggle,-3*\wiggle) {$9$};
\node[scale=0.901] at (5*\wiggle,-3*\wiggle) {$12$};
\node[scale=0.901] at (6*\wiggle,-3*\wiggle) {$6$};
\node[scale=0.901] at (7*\wiggle,-3*\wiggle) {$5$};
\node[scale=0.901] at (8*\wiggle,-3*\wiggle) {$1$};
\node[scale=0.901] at (9*\wiggle,-3*\wiggle) {$4$};
\node[scale=0.901] at (10*\wiggle,-3*\wiggle) {$9$};
%\node[scale=0.901] at (11*\wiggle,-3*\wiggle) {$12$};

\node[scale=0.901] at (0*\wiggle,-4*\wiggle) {$10$};
\node[scale=0.901] at (1*\wiggle,-4*\wiggle) {$8$};
\node[scale=0.901] at (2*\wiggle,-4*\wiggle) {$7$};
\node[scale=0.901] at (3*\wiggle,-4*\wiggle) {$3$};
\node[scale=0.901] at (4*\wiggle,-4*\wiggle) {$2$};
\node[scale=0.901] at (5*\wiggle,-4*\wiggle) {$11$};
\node[scale=0.901] at (6*\wiggle,-4*\wiggle) {$10$};
\node[scale=0.901] at (7*\wiggle,-4*\wiggle) {$8$};
\node[scale=0.901] at (8*\wiggle,-4*\wiggle) {$7$};
\node[scale=0.901] at (9*\wiggle,-4*\wiggle) {$3$};
\node[scale=0.901] at (10*\wiggle,-4*\wiggle) {$2$};
%\node[scale=0.901] at (11*\wiggle,-4*\wiggle) {$11$};

\node[scale=0.901] at (0*\wiggle,-5*\wiggle) {$9$};
\node[scale=0.901] at (1*\wiggle,-5*\wiggle) {$12$};
\node[scale=0.901] at (2*\wiggle,-5*\wiggle) {$6$};
\node[scale=0.901] at (3*\wiggle,-5*\wiggle) {$5$};
\node[scale=0.901] at (4*\wiggle,-5*\wiggle) {$1$};
\node[scale=0.901] at (5*\wiggle,-5*\wiggle) {$4$};
\node[scale=0.901] at (6*\wiggle,-5*\wiggle) {$9$};
\node[scale=0.901] at (7*\wiggle,-5*\wiggle) {$12$};
\node[scale=0.901] at (8*\wiggle,-5*\wiggle) {$6$};
\node[scale=0.901] at (9*\wiggle,-5*\wiggle) {$5$};
\node[scale=0.901] at (10*\wiggle,-5*\wiggle) {$1$};
%\node[scale=0.901] at (11*\wiggle,-5*\wiggle) {$4$};

\node[scale=0.901] at (0*\wiggle,-6*\wiggle) {$2$};
\node[scale=0.901] at (1*\wiggle,-6*\wiggle) {$11$};
\node[scale=0.901] at (2*\wiggle,-6*\wiggle) {$10$};
\node[scale=0.901] at (3*\wiggle,-6*\wiggle) {$8$};
\node[scale=0.901] at (4*\wiggle,-6*\wiggle) {$7$};
\node[scale=0.901] at (5*\wiggle,-6*\wiggle) {$3$};
\node[scale=0.901] at (6*\wiggle,-6*\wiggle) {$2$};
\node[scale=0.901] at (7*\wiggle,-6*\wiggle) {$11$};
\node[scale=0.901] at (8*\wiggle,-6*\wiggle) {$10$};
\node[scale=0.901] at (9*\wiggle,-6*\wiggle) {$8$};
\node[scale=0.901] at (10*\wiggle,-6*\wiggle) {$7$};
%\node[scale=0.901] at (11*\wiggle,-6*\wiggle) {$3$};

\node[scale=0.901] at (0*\wiggle,-7*\wiggle) {$1$};
\node[scale=0.901] at (1*\wiggle,-7*\wiggle) {$4$};
\node[scale=0.901] at (2*\wiggle,-7*\wiggle) {$9$};
\node[scale=0.901] at (3*\wiggle,-7*\wiggle) {$12$};
\node[scale=0.901] at (4*\wiggle,-7*\wiggle) {$6$};
\node[scale=0.901] at (5*\wiggle,-7*\wiggle) {$5$};
\node[scale=0.901] at (6*\wiggle,-7*\wiggle) {$1$};
\node[scale=0.901] at (7*\wiggle,-7*\wiggle) {$4$};
\node[scale=0.901] at (8*\wiggle,-7*\wiggle) {$9$};
\node[scale=0.901] at (9*\wiggle,-7*\wiggle) {$12$};
\node[scale=0.901] at (10*\wiggle,-7*\wiggle) {$6$};
%\node[scale=0.901] at (11*\wiggle,-7*\wiggle) {$5$};

\node[scale=0.901] at (0*\wiggle,-8*\wiggle) {$7$};
\node[scale=0.901] at (1*\wiggle,-8*\wiggle) {$3$};
\node[scale=0.901] at (2*\wiggle,-8*\wiggle) {$2$};
\node[scale=0.901] at (3*\wiggle,-8*\wiggle) {$11$};
\node[scale=0.901] at (4*\wiggle,-8*\wiggle) {$10$};
\node[scale=0.901] at (5*\wiggle,-8*\wiggle) {$8$};
\node[scale=0.901] at (6*\wiggle,-8*\wiggle) {$7$};
\node[scale=0.901] at (7*\wiggle,-8*\wiggle) {$3$};
\node[scale=0.901] at (8*\wiggle,-8*\wiggle) {$2$};
\node[scale=0.901] at (9*\wiggle,-8*\wiggle) {$11$};
\node[scale=0.901] at (10*\wiggle,-8*\wiggle) {$10$};
%\node[scale=0.901] at (11*\wiggle,-8*\wiggle) {$8$};

\node[scale=0.901] at (0*\wiggle,-9*\wiggle) {$6$};
\node[scale=0.901] at (1*\wiggle,-9*\wiggle) {$5$};
\node[scale=0.901] at (2*\wiggle,-9*\wiggle) {$1$};
\node[scale=0.901] at (3*\wiggle,-9*\wiggle) {$4$};
\node[scale=0.901] at (4*\wiggle,-9*\wiggle) {$9$};
\node[scale=0.901] at (5*\wiggle,-9*\wiggle) {$12$};
\node[scale=0.901] at (6*\wiggle,-9*\wiggle) {$6$};
\node[scale=0.901] at (7*\wiggle,-9*\wiggle) {$5$};
\node[scale=0.901] at (8*\wiggle,-9*\wiggle) {$1$};
\node[scale=0.901] at (9*\wiggle,-9*\wiggle) {$4$};
\node[scale=0.901] at (10*\wiggle,-9*\wiggle) {$9$};
%\node[scale=0.901] at (11*\wiggle,-9*\wiggle) {$12$};

\node[scale=0.901] at (0*\wiggle,-10*\wiggle) {$10$};
\node[scale=0.901] at (1*\wiggle,-10*\wiggle) {$8$};
\node[scale=0.901] at (2*\wiggle,-10*\wiggle) {$7$};
\node[scale=0.901] at (3*\wiggle,-10*\wiggle) {$3$};
\node[scale=0.901] at (4*\wiggle,-10*\wiggle) {$2$};
\node[scale=0.901] at (5*\wiggle,-10*\wiggle) {$11$};
\node[scale=0.901] at (6*\wiggle,-10*\wiggle) {$10$};
\node[scale=0.901] at (7*\wiggle,-10*\wiggle) {$8$};
\node[scale=0.901] at (8*\wiggle,-10*\wiggle) {$7$};
\node[scale=0.901] at (9*\wiggle,-10*\wiggle) {$3$};
\node[scale=0.901] at (10*\wiggle,-10*\wiggle) {$2$};
%\node[scale=0.901] at (11*\wiggle,-10*\wiggle) {$11$};

% \node[scale=0.901] at (0*\wiggle,-11*\wiggle) {$9$};
% \node[scale=0.901] at (1*\wiggle,-11*\wiggle) {$12$};
% \node[scale=0.901] at (2*\wiggle,-11*\wiggle) {$6$};
% \node[scale=0.901] at (3*\wiggle,-11*\wiggle) {$5$};
% \node[scale=0.901] at (4*\wiggle,-11*\wiggle) {$1$};
% \node[scale=0.901] at (5*\wiggle,-11*\wiggle) {$4$};
% \node[scale=0.901] at (6*\wiggle,-11*\wiggle) {$9$};
% \node[scale=0.901] at (7*\wiggle,-11*\wiggle) {$12$};
% \node[scale=0.901] at (8*\wiggle,-11*\wiggle) {$6$};
% \node[scale=0.901] at (9*\wiggle,-11*\wiggle) {$5$};
% \node[scale=0.901] at (10*\wiggle,-11*\wiggle) {$1$};
% \node[scale=0.901] at (11*\wiggle,-11*\wiggle) {$4$};

\end{tikzpicture}
\end{equation*}

\noindent This is simply a portion of the quiver $\mathbb{ZA}_\infty^\infty$, where we have removed the arrows to improve clarity. Since we are interested in the paths starting from vertex $2$, we have considered a portion in which this vertex is in the top left corner. Each vertex appearing in the grid corresponds to a unique path in $\Lambda$ from vertex $2$ to vertex $j$. Therefore, the number of paths of length $\ell$ from vertex $2$ to vertex $j$ (equivalently, $\dim_\Bbbk (e_2 \Lambda e_j)_\ell$) is equal to the number of occurrences of $j$ along the diagonal strip indexed by the red non-negative integer $\ell$. For example, the following table records the number of paths in $\Lambda$ from vertex $2$ to the vertices $j=1,2,3,4$ of length at most $10$:
\begin{align*}
\begin{tabular}{c|c c c c c c c c c c c}
\multirow{2}{*}{\makecell{Terminal\\vertex}} & \multicolumn{11}{c}{Path length} \\ \cline{2-12}
  & 0 & 1 & 2 & 3 & 4 & 5 & 6 & 7 & 8 & 9 & 10 \\ \hline
1 & 0 & 1 & 0 & 0 & 0 & 1 & 0 & 2 & 0 & 1 & 0 \\ 
2 & 1 & 0 & 0 & 0 & 1 & 0 & 2 & 0 & 1 & 0 & 2 \\ 
3 & 0 & 0 & 0 & 1 & 0 & 1 & 0 & 1 & 0 & 2 & 0 \\ 
4 & 0 & 0 & 1 & 0 & 0 & 0 & 1 & 0 & 1 & 0 & 1 
\end{tabular}
\end{align*}

\indent We can modify the above approach to also allow us to calculate the number of paths from a vertex $i$ to a vertex $j$ which cannot, in $\Lambda$, be factored through a certain vertex (or vertices). This will be relevant in the next section. \\
\indent For example, suppose we are interested in counting the number of paths in $\Lambda$ which start at vertex $2$ and which do not pass through vertex $7$, say. Equivalently, we are interested in calculating the dimension of the vector space $e_2 \Lambda / e_2 \Lambda e_7 \Lambda$. Using the lattice representation of $\Lambda$, this happens if and only if that vertex does not lie directly to the south or directly to the east of one of the instances of vertex $7$. Below, the vertices for which there exists a nonzero path to it from vertex $2$ in $\Lambda$ are shown in black, and the other vertices are shown in grey:
\begin{equation*}
\begin{tikzpicture}[-,thick,scale=1]
\def \wiggle {0.7};

\draw[dashed] (-0.5*\wiggle,-1.5*\wiggle) -- (7.5*\wiggle,-1.5*\wiggle);
\draw[dashed] (-0.5*\wiggle,-1.5*\wiggle) -- (-0.5*\wiggle,-5.5*\wiggle);
\draw[dashed] (3.5*\wiggle,0.5*\wiggle) -- (7.5*\wiggle,0.5*\wiggle);
\draw[dashed] (3.5*\wiggle,0.5*\wiggle) -- (3.5*\wiggle,-5.5*\wiggle);

\node[scale=0.901] at (0*\wiggle,0*\wiggle) {$2$};
\node[scale=0.901] at (1*\wiggle,0*\wiggle) {$11$};
\node[scale=0.901] at (2*\wiggle,0*\wiggle) {$10$};
\node[scale=0.901] at (3*\wiggle,0*\wiggle) {$8$};
\node[scale=0.901,color=lightgray] at (4*\wiggle,0*\wiggle) {$7$};
\node[scale=0.901,color=lightgray] at (5*\wiggle,0*\wiggle) {$3$};
\node[scale=0.901,color=lightgray] at (6*\wiggle,0*\wiggle) {$2$};
\node[scale=0.901,color=lightgray] at (7*\wiggle,0*\wiggle) {$11$};

\node[scale=0.901] at (0*\wiggle,-1*\wiggle) {$1$};
\node[scale=0.901] at (1*\wiggle,-1*\wiggle) {$4$};
\node[scale=0.901] at (2*\wiggle,-1*\wiggle) {$9$};
\node[scale=0.901] at (3*\wiggle,-1*\wiggle) {$12$};
\node[scale=0.901,color=lightgray] at (4*\wiggle,-1*\wiggle) {$6$};
\node[scale=0.901,color=lightgray] at (5*\wiggle,-1*\wiggle) {$5$};
\node[scale=0.901,color=lightgray] at (6*\wiggle,-1*\wiggle) {$1$};
\node[scale=0.901,color=lightgray] at (7*\wiggle,-1*\wiggle) {$4$};

\node[scale=0.901,color=lightgray] at (0*\wiggle,-2*\wiggle) {$7$};
\node[scale=0.901,color=lightgray] at (1*\wiggle,-2*\wiggle) {$3$};
\node[scale=0.901,color=lightgray] at (2*\wiggle,-2*\wiggle) {$2$};
\node[scale=0.901,color=lightgray] at (3*\wiggle,-2*\wiggle) {$11$};
\node[scale=0.901,color=lightgray] at (4*\wiggle,-2*\wiggle) {$10$};
\node[scale=0.901,color=lightgray] at (5*\wiggle,-2*\wiggle) {$8$};
\node[scale=0.901,color=lightgray] at (6*\wiggle,-2*\wiggle) {$7$};
\node[scale=0.901,color=lightgray] at (7*\wiggle,-2*\wiggle) {$3$};

\node[scale=0.901,color=lightgray] at (0*\wiggle,-3*\wiggle) {$6$};
\node[scale=0.901,color=lightgray] at (1*\wiggle,-3*\wiggle) {$5$};
\node[scale=0.901,color=lightgray] at (2*\wiggle,-3*\wiggle) {$1$};
\node[scale=0.901,color=lightgray] at (3*\wiggle,-3*\wiggle) {$4$};
\node[scale=0.901,color=lightgray] at (4*\wiggle,-3*\wiggle) {$9$};
\node[scale=0.901,color=lightgray] at (5*\wiggle,-3*\wiggle) {$12$};
\node[scale=0.901,color=lightgray] at (6*\wiggle,-3*\wiggle) {$6$};
\node[scale=0.901,color=lightgray] at (7*\wiggle,-3*\wiggle) {$5$};

\node[scale=0.901,color=lightgray] at (0*\wiggle,-4*\wiggle) {$10$};
\node[scale=0.901,color=lightgray] at (1*\wiggle,-4*\wiggle) {$8$};
\node[scale=0.901,color=lightgray] at (2*\wiggle,-4*\wiggle) {$7$};
\node[scale=0.901,color=lightgray] at (3*\wiggle,-4*\wiggle) {$3$};
\node[scale=0.901,color=lightgray] at (4*\wiggle,-4*\wiggle) {$2$};
\node[scale=0.901,color=lightgray] at (5*\wiggle,-4*\wiggle) {$11$};
\node[scale=0.901,color=lightgray] at (6*\wiggle,-4*\wiggle) {$10$};
\node[scale=0.901,color=lightgray] at (7*\wiggle,-4*\wiggle) {$8$};

\node[scale=0.901,color=lightgray] at (0*\wiggle,-5*\wiggle) {$9$};
\node[scale=0.901,color=lightgray] at (1*\wiggle,-5*\wiggle) {$12$};
\node[scale=0.901,color=lightgray] at (2*\wiggle,-5*\wiggle) {$6$};
\node[scale=0.901,color=lightgray] at (3*\wiggle,-5*\wiggle) {$5$};
\node[scale=0.901,color=lightgray] at (4*\wiggle,-5*\wiggle) {$1$};
\node[scale=0.901,color=lightgray] at (5*\wiggle,-5*\wiggle) {$4$};
\node[scale=0.901,color=lightgray] at (6*\wiggle,-5*\wiggle) {$9$};
\node[scale=0.901,color=lightgray] at (7*\wiggle,-5*\wiggle) {$12$};

\end{tikzpicture}
\end{equation*}
\noindent In particular, $\dim_\Bbbk e_2 \Lambda / e_2 \Lambda e_7 \Lambda = 8$. If we are instead interested in paths which cannot be factored through the vertex $9$, the picture we obtain is as follows:
\begin{equation*}
\begin{tikzpicture}[-,thick,scale=1]
\def \wiggle {0.7};

\draw[dashed] (-0.5*\wiggle,-4.5*\wiggle) -- (10.5*\wiggle,-4.5*\wiggle);
\draw[dashed] (-0.5*\wiggle,-4.5*\wiggle) -- (-0.5*\wiggle,-6.5*\wiggle);
\draw[dashed] (1.5*\wiggle,-0.5*\wiggle) -- (10.5*\wiggle,-0.5*\wiggle);
\draw[dashed] (1.5*\wiggle,-0.5*\wiggle) -- (1.5*\wiggle,-6.5*\wiggle);

\node[scale=0.901] at (0*\wiggle,0*\wiggle) {$2$};
\node[scale=0.901] at (1*\wiggle,0*\wiggle) {$11$};
\node[scale=0.901] at (2*\wiggle,0*\wiggle) {$10$};
\node[scale=0.901] at (3*\wiggle,0*\wiggle) {$8$};
\node[scale=0.901] at (4*\wiggle,0*\wiggle) {$7$};
\node[scale=0.901] at (5*\wiggle,0*\wiggle) {$3$};
\node[scale=0.901] at (6*\wiggle,0*\wiggle) {$2$};
\node[scale=0.901] at (7*\wiggle,0*\wiggle) {$11$};
\node[scale=0.901] at (8*\wiggle,0*\wiggle) {$10$};
\node[scale=0.901] at (9*\wiggle,0*\wiggle) {$8$};
\node[scale=0.901] at (10*\wiggle,0*\wiggle) {$7$};
%\node[scale=0.901] at (11*\wiggle,0*\wiggle) {$3$};

\node[scale=0.901] at (0*\wiggle,-1*\wiggle) {$1$};
\node[scale=0.901] at (1*\wiggle,-1*\wiggle) {$4$};
\node[scale=0.901,color=lightgray] at (2*\wiggle,-1*\wiggle) {$9$};
\node[scale=0.901,color=lightgray] at (3*\wiggle,-1*\wiggle) {$12$};
\node[scale=0.901,color=lightgray] at (4*\wiggle,-1*\wiggle) {$6$};
\node[scale=0.901,color=lightgray] at (5*\wiggle,-1*\wiggle) {$5$};
\node[scale=0.901,color=lightgray] at (6*\wiggle,-1*\wiggle) {$1$};
\node[scale=0.901,color=lightgray] at (7*\wiggle,-1*\wiggle) {$4$};
\node[scale=0.901,color=lightgray] at (8*\wiggle,-1*\wiggle) {$9$};
\node[scale=0.901,color=lightgray] at (9*\wiggle,-1*\wiggle) {$12$};
\node[scale=0.901,color=lightgray] at (10*\wiggle,-1*\wiggle) {$6$};
%\node[scale=0.901] at (11*\wiggle,-1*\wiggle) {$5$};

\node[scale=0.901] at (0*\wiggle,-2*\wiggle) {$7$};
\node[scale=0.901] at (1*\wiggle,-2*\wiggle) {$3$};
\node[scale=0.901,color=lightgray] at (2*\wiggle,-2*\wiggle) {$2$};
\node[scale=0.901,color=lightgray] at (3*\wiggle,-2*\wiggle) {$11$};
\node[scale=0.901,color=lightgray] at (4*\wiggle,-2*\wiggle) {$10$};
\node[scale=0.901,color=lightgray] at (5*\wiggle,-2*\wiggle) {$8$};
\node[scale=0.901,color=lightgray] at (6*\wiggle,-2*\wiggle) {$7$};
\node[scale=0.901,color=lightgray] at (7*\wiggle,-2*\wiggle) {$3$};
\node[scale=0.901,color=lightgray] at (8*\wiggle,-2*\wiggle) {$2$};
\node[scale=0.901,color=lightgray] at (9*\wiggle,-2*\wiggle) {$11$};
\node[scale=0.901,color=lightgray] at (10*\wiggle,-2*\wiggle) {$10$};
%\node[scale=0.901,color=lightgray] at (11*\wiggle,-2*\wiggle) {$8$};

\node[scale=0.901] at (0*\wiggle,-3*\wiggle) {$6$};
\node[scale=0.901] at (1*\wiggle,-3*\wiggle) {$5$};
\node[scale=0.901,color=lightgray] at (2*\wiggle,-3*\wiggle) {$1$};
\node[scale=0.901,color=lightgray] at (3*\wiggle,-3*\wiggle) {$4$};
\node[scale=0.901,color=lightgray] at (4*\wiggle,-3*\wiggle) {$9$};
\node[scale=0.901,color=lightgray] at (5*\wiggle,-3*\wiggle) {$12$};
\node[scale=0.901,color=lightgray] at (6*\wiggle,-3*\wiggle) {$6$};
\node[scale=0.901,color=lightgray] at (7*\wiggle,-3*\wiggle) {$5$};
\node[scale=0.901,color=lightgray] at (8*\wiggle,-3*\wiggle) {$1$};
\node[scale=0.901,color=lightgray] at (9*\wiggle,-3*\wiggle) {$4$};
\node[scale=0.901,color=lightgray] at (10*\wiggle,-3*\wiggle) {$9$};
%\node[scale=0.901] at (11*\wiggle,-3*\wiggle) {$12$};

\node[scale=0.901] at (0*\wiggle,-4*\wiggle) {$10$};
\node[scale=0.901] at (1*\wiggle,-4*\wiggle) {$8$};
\node[scale=0.901,color=lightgray] at (2*\wiggle,-4*\wiggle) {$7$};
\node[scale=0.901,color=lightgray] at (3*\wiggle,-4*\wiggle) {$3$};
\node[scale=0.901,color=lightgray] at (4*\wiggle,-4*\wiggle) {$2$};
\node[scale=0.901,color=lightgray] at (5*\wiggle,-4*\wiggle) {$11$};
\node[scale=0.901,color=lightgray] at (6*\wiggle,-4*\wiggle) {$10$};
\node[scale=0.901,color=lightgray] at (7*\wiggle,-4*\wiggle) {$8$};
\node[scale=0.901,color=lightgray] at (8*\wiggle,-4*\wiggle) {$7$};
\node[scale=0.901,color=lightgray] at (9*\wiggle,-4*\wiggle) {$3$};
\node[scale=0.901,color=lightgray] at (10*\wiggle,-4*\wiggle) {$2$};
%\node[scale=0.901,color=lightgray] at (11*\wiggle,-4*\wiggle) {$11$};

\node[scale=0.901,color=lightgray] at (0*\wiggle,-5*\wiggle) {$9$};
\node[scale=0.901,color=lightgray] at (1*\wiggle,-5*\wiggle) {$12$};
\node[scale=0.901,color=lightgray] at (2*\wiggle,-5*\wiggle) {$6$};
\node[scale=0.901,color=lightgray] at (3*\wiggle,-5*\wiggle) {$5$};
\node[scale=0.901,color=lightgray] at (4*\wiggle,-5*\wiggle) {$1$};
\node[scale=0.901,color=lightgray] at (5*\wiggle,-5*\wiggle) {$4$};
\node[scale=0.901,color=lightgray] at (6*\wiggle,-5*\wiggle) {$9$};
\node[scale=0.901,color=lightgray] at (7*\wiggle,-5*\wiggle) {$12$};
\node[scale=0.901,color=lightgray] at (8*\wiggle,-5*\wiggle) {$6$};
\node[scale=0.901,color=lightgray] at (9*\wiggle,-5*\wiggle) {$5$};
\node[scale=0.901,color=lightgray] at (10*\wiggle,-5*\wiggle) {$1$};
%\node[scale=0.901,color=lightgray] at (11*\wiggle,-5*\wiggle) {$4$};

\node[scale=0.901,color=lightgray] at (0*\wiggle,-6*\wiggle) {$2$};
\node[scale=0.901,color=lightgray] at (1*\wiggle,-6*\wiggle) {$11$};
\node[scale=0.901,color=lightgray] at (2*\wiggle,-6*\wiggle) {$10$};
\node[scale=0.901,color=lightgray] at (3*\wiggle,-6*\wiggle) {$8$};
\node[scale=0.901,color=lightgray] at (4*\wiggle,-6*\wiggle) {$7$};
\node[scale=0.901,color=lightgray] at (5*\wiggle,-6*\wiggle) {$3$};
\node[scale=0.901,color=lightgray] at (6*\wiggle,-6*\wiggle) {$2$};
\node[scale=0.901,color=lightgray] at (7*\wiggle,-6*\wiggle) {$11$};
\node[scale=0.901,color=lightgray] at (8*\wiggle,-6*\wiggle) {$10$};
\node[scale=0.901,color=lightgray] at (9*\wiggle,-6*\wiggle) {$8$};
\node[scale=0.901,color=lightgray] at (10*\wiggle,-6*\wiggle) {$7$};
%\node[scale=0.901] at (11*\wiggle,-6*\wiggle) {$3$};

\end{tikzpicture}
\end{equation*}
\noindent Due to the periodicity of the top row, there are infinitely many nonzero paths in $\Lambda$ which start at the vertex $2$ and do not pass through the vertex $9$. Therefore $\dim_\Bbbk e_2 \Lambda / e_2 \Lambda e_9 \Lambda = \infty$.

%%%%%%%%%%%%%%%%%%%%%%%%%%%%%%%%%%%%%%%%%%%%%%%%%%%%
%%%%%%%%%%%%%%%%%%%%%%%%%%%%%%%%%%%%%%%%%%%%%%%%%%%%
%%%%%%%%%%%%%%%%%%%%%%%%%%%%%%%%%%%%%%%%%%%%%%%%%%%%

\section{Proofs of Main Results}
\subsection{On when the Auslander map is an isomorphism}
We now work towards providing necessary and sufficient conditions for the Auslander map corresponding to the pairs $(A,G)$ from Proposition \ref{prop:Classifying} to be an isomorphism. First, we require an elementary lemma:

\begin{lem} \label{EasyLem}
Suppose that $G$ is a finite group and $g,h \in G$ do not commute. Then the elements
\begin{align*}
1_G, g, gh, ghg, ghgh, \dots, (gh)^{k-1} , (gh)^{k-1} g 
\end{align*}
are all distinct if and only if $|gh| \geqslant k$. In particular, the above list consists of every element of $G$ without repetitions if and only if $|gh| = \frac{1}{2}|G|$.
\end{lem}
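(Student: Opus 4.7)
The plan is to partition the list of $2k$ elements into two sub-sequences of length $k$: the \emph{even} terms $(gh)^i$ for $0 \leqslant i \leqslant k-1$ and the \emph{odd} terms $(gh)^i g$ for $0 \leqslant i \leqslant k-1$. Distinctness of the full list then decomposes into three types of potential collisions, and the non-commutativity of $g$ and $h$ will be used in exactly one of them.

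For the forward direction of the main equivalence, I would simply observe that the elements $1_G, gh, (gh)^2, \dots, (gh)^{k-1}$ all appear among the even terms, so if all $2k$ listed elements are distinct, then in particular these $k$ powers of $gh$ are distinct, which immediately yields $|gh| \geqslant k$.

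For the reverse direction, assuming $|gh| \geqslant k$, I would argue by cases on the type of hypothetical collision. A collision between two even terms $(gh)^i = (gh)^j$ with $i<j$ would force $(gh)^{j-i} = 1_G$ with $0 < j-i < k \leqslant |gh|$, a contradiction. A collision between two odd terms reduces to the even case by right-cancelling $g$. The essential case is a cross-type collision $(gh)^i = (gh)^j g$, which forces $g = (gh)^{i-j} \in \langle gh \rangle$. Since $\langle gh \rangle$ is cyclic and therefore abelian, $g$ then commutes with $gh$, so $g(gh) = (gh)g$, which upon left-cancelling $g$ gives $gh = hg$, contradicting the hypothesis that $g$ and $h$ do not commute. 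This cross-case is the one step where the non-commutativity hypothesis is genuinely needed, and is what I expect to require the most care in presenting.

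For the ``In particular'' statement, I would combine the main equivalence with a Lagrange argument. If the list exhausts $G$ without repetition then $|G| = 2k$ and distinctness forces $|gh| \geqslant k = \tfrac{1}{2}|G|$; since $|gh|$ divides $|G|$, the only alternative to $|gh| = \tfrac{1}{2}|G|$ is $|gh| = |G|$, but then $G = \langle gh \rangle$ would be cyclic and hence abelian, contradicting $gh \neq hg$. Conversely, taking $k = \tfrac{1}{2}|G|$ when $|gh| = \tfrac{1}{2}|G|$, the first part yields $2k = |G|$ distinct elements, which must therefore be all of $G$.
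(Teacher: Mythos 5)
Your proof is correct and takes essentially the same route as the paper's: the forward implication is read off from the powers of $gh$ occurring in the list, and the converse is a case analysis on the type of collision, with the cross-type collision $(gh)^i = (gh)^j g$ being the only place non-commutativity enters. Your treatment of that case (observing $g \in \langle gh \rangle$ forces $g$ to commute with $gh$) is a slightly cleaner packaging of the paper's explicit computation with $(hg)^{n-m}$, and you also spell out the ``in particular'' clause via Lagrange, which the paper leaves implicit.
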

\begin{proof}
Necessity is clear, so we prove only sufficiency. Suppose we have a repetition. If it is of the form $(gh)^m g^\varepsilon = (gh)^n g^\varepsilon$, where $0 \leqslant m < n \leqslant k-1$ and $\varepsilon \in \{0,1\}$, it follows that $|gh| \leqslant n-m \leqslant k-1$. \\ 
\indent Alternatively, a repetition could have the form $(gh)^m = (gh)^n g$, where $0 \leqslant m < n \leqslant k-1$. Then $g = (gh)^{m-n}$, but also
\begin{align*}
g (hg)^{n-m} = (gh)^{n-m} g = 1,
\end{align*}
so that $g^{-1} = (hg)^{n-m}$. Hence
\begin{align*}
hg^{-1} = h (gh)^{n-m} = (hg)^{n-m} h = g^{-1}h.
\end{align*}
Therefore $gh=hg$, contrary to our assumption. The case where $m > n$ is similar, exhausting all of the possibilities.
\end{proof}

With this lemma in hand, we are able to exploit the structure of the algebra $\Lambda$ to determine precisely when the Auslander map is an isomorphism:

\begin{thm} \label{AuslanderForCoaction}
Suppose that $A$ and $G$ are as in Proposition \ref{prop:Classifying}. Then the Auslander map for the pair $(A,G)$ is an isomorphism if and only if $|ab| = \frac{1}{2} |G|$.
\end{thm}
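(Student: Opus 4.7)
The plan is to convert the question about the Auslander map into a combinatorial statement on the lattice presentation of the McKay quiver, and then extract the numerical criterion from Lemma \ref{EasyLem}. By Theorem \ref{thm:FinDimLambda}, the Auslander map is an isomorphism iff $\Lambda/\langle e_1\rangle$ is finite-dimensional, which (decomposing over vertex idempotents) reduces to checking that $e_g \Lambda/(e_g \Lambda e_1 \Lambda)$ is finite-dimensional for every $g \in G$.

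I would use the lattice presentation of the McKay quiver with a copy of vertex $g$ placed at the top-left corner. As in Section \ref{sec:DimensionCounting}, $\dim_\Bbbk e_g \Lambda/(e_g \Lambda e_1 \Lambda)$ counts the lattice points $(i,j)$ with $i,j \geq 0$ that are not weakly south-east of any non-origin occurrence of $1_G$. This count is finite if and only if $1_G$ appears in both the top row at some $(0,j_0)$ with $j_0 > 0$ and in the leftmost column at some $(i_0,0)$ with $i_0 > 0$ of the $g$-centred view: these jointly confine the relevant positions to a finite rectangle, while failure of either allows an entire infinite row or column to evade all copies of $1_G$ by periodicity. A direct computation of the row and column labels --- $(ab)^{-k}g$ at $(0,2k)$, $a^{-1}(ab)^{-k}g$ at $(0,2k+1)$, and symmetrically $(ba)^{-k}g$ at $(2k,0)$ and $b^{-1}(ba)^{-k}g$ at $(2k+1,0)$ --- turns these conditions into $g \in \langle ab\rangle \cup \langle ab\rangle a$ and $g \in \langle ba\rangle \cup \langle ba\rangle b$ respectively.

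Requiring these for all $g \in G$ amounts to $G = \langle ab\rangle \cup \langle ab\rangle a$ and $G = \langle ba\rangle \cup \langle ba\rangle b$. I would then invoke Lemma \ref{EasyLem} applied to the non-commuting pair $(g,h) = (a,b)$ (which is non-commuting by the non-abelian hypothesis) with $k = |ab|$: this gives $2|ab|$ distinct elements whose underlying set is precisely $\langle ab\rangle \cup \langle ab\rangle a$, so the first equality holds iff $2|ab| = |G|$. The second equality reduces to the same criterion via the symmetric roles of $a$ and $b$ and the equality $|ba| = |ab|$. The main obstacle is the careful bookkeeping with the alternating arrow labels in the lattice presentation needed to justify the row/column formulas above; once those are secured, the equivalence $|ab| = \tfrac{1}{2}|G|$ follows immediately.
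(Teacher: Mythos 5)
Your proposal is correct and follows essentially the same route as the paper: reduce via Theorem \ref{thm:FinDimLambda} and the vertex-idempotent decomposition, translate finite-dimensionality of each $e_g\Lambda/e_g\Lambda e_1\Lambda$ into the condition that $1_G$ appears in the row and column through $g$ in the lattice/toroidal presentation (Section \ref{sec:DimensionCounting}), and finish with Lemma \ref{EasyLem}. Your explicit coset formulation $G=\langle ab\rangle\cup\langle ab\rangle a$ (applying the lemma to $(a,b)$ rather than $(a^{-1},b^{-1})$) and your handling of the alternating $u$/$v$ labels are only cosmetic variations on the paper's argument.
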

\begin{proof}
%The Auslander map is an isomorphism iff $\Lambda / \Lambda e_0 \Lambda$ is finite-dimensional, iff $e_i \Lambda / e_i \Lambda e_0 \Lambda$ is finite-dimensional for each vertex $j$, iff every row and column of the toroidal representation of $\Lambda$ contains every element of $G$, iff for every $g \in G$, the list $g, ga, gab, gaba, gabab, \dots$ includes every element of $G$, iff $ab$ has order $\frac{1}{2} |G|$.
We have the following sequence of equivalent statements:
\begin{align*}
\text{The Auslander map is an}&\text{ isomorphism} \\
&\Leftrightarrow \Lambda / \Lambda e_1 \Lambda \text{ is finite-dimensional} \tag{Theorem \ref{thm:FinDimLambda}} \\
&\Leftrightarrow e_j \Lambda / e_j \Lambda e_1 \Lambda \text{ is finite-dimensional for } \\ 
&\mathrel{\phantom{\Leftrightarrow}} \text{each vertex } j \\
&\Leftrightarrow \text{every row and column of the toroidal} \\
&\mathrel{\phantom{\Leftrightarrow}} \text{presentation of } \Lambda \text{ contains } 1_G \tag{Section \ref{sec:DimensionCounting}}\\
&\Leftrightarrow \text{every row and column of the toroidal} \\
&\mathrel{\phantom{\Leftrightarrow}} \text{presentation of } \Lambda \text{ contains every element of } G \\
&\Leftrightarrow \text{for all } g \in G, \text{ the list } g, a^{-1}g, b^{-1} a^{-1} g, a^{-1} b^{-1} a^{-1} g, \dots \\
&\mathrel{\phantom{\Leftrightarrow}} \text{includes every element of } G \\
&\Leftrightarrow ab \text{ has order } \tfrac{1}{2}|G|, \tag{Lemma \ref{EasyLem}}
\end{align*}
where we apply Lemma \ref{EasyLem} with $g=a^{-1}$ and $h=b^{-1}$. This proves the result.
\end{proof}

\begin{example}
Suppose that
\begin{align*}
G = \langle a,b \mid a^2=b^2, a^4=b^4=(a^3b)^3=1 \rangle,
\end{align*}
which is abstractly isomorphic to the group in the running example in Section \ref{sec:Quivers}. If we give $A$ a group-grading by declaring $\deg_G u = a$ and $\deg_G v = b$, it turns out that the invariant ring $A_{1_G}$ has different properties compared with in Section \ref{sec:Quivers}. \\
\indent It is easy to see that, as a set, we have
\begin{align*}
G = \{ 1,a,a^2,a^3, b,ab,a^2b,a^3b, ba,aba,a^2ba,a^3ba \},
\end{align*}
as was the case in Section \ref{sec:Quivers}. If we number the vertices of the McKay quiver by the index of the group element in the above set, then the toroidal presentation of the McKay quiver is as follows (where we have omitted the arrows but, as always, they implicitly point directly south and east):

\begin{equation*}
\begin{tikzpicture}[->,>=stealth,thick,scale=1]
\def \wiggle {0.5};

\node[scale=0.71] at (0*\wiggle,0*\wiggle) {$1$};
\node[scale=0.71] at (1*\wiggle,0*\wiggle) {$4$};
\node[scale=0.71] at (2*\wiggle,0*\wiggle) {$9$};
\node[scale=0.71] at (3*\wiggle,0*\wiggle) {$12$};
\node[scale=0.71] at (4*\wiggle,0*\wiggle) {$8$};
\node[scale=0.71] at (5*\wiggle,0*\wiggle) {$7$};
\node[scale=0.71] at (6*\wiggle,0*\wiggle) {$3$};
\node[scale=0.71] at (7*\wiggle,0*\wiggle) {$2$};
\node[scale=0.71] at (8*\wiggle,0*\wiggle) {$11$};
\node[scale=0.71] at (9*\wiggle,0*\wiggle) {$10$};
\node[scale=0.71] at (10*\wiggle,0*\wiggle) {$6$};
\node[scale=0.71] at (11*\wiggle,0*\wiggle) {$5$};
\node[scale=0.71] at (12*\wiggle,0*\wiggle) {$1$};

\node[scale=0.71] at (0*\wiggle,-1*\wiggle) {$7$};
\node[scale=0.71] at (1*\wiggle,-1*\wiggle) {$3$};
\node[scale=0.71] at (2*\wiggle,-1*\wiggle) {$2$};
\node[scale=0.71] at (3*\wiggle,-1*\wiggle) {$11$};
\node[scale=0.71] at (4*\wiggle,-1*\wiggle) {$10$};
\node[scale=0.71] at (5*\wiggle,-1*\wiggle) {$6$};
\node[scale=0.71] at (6*\wiggle,-1*\wiggle) {$5$};
\node[scale=0.71] at (7*\wiggle,-1*\wiggle) {$1$};
\node[scale=0.71] at (8*\wiggle,-1*\wiggle) {$4$};
\node[scale=0.71] at (9*\wiggle,-1*\wiggle) {$9$};
\node[scale=0.71] at (10*\wiggle,-1*\wiggle) {$12$};
\node[scale=0.71] at (11*\wiggle,-1*\wiggle) {$8$};
\node[scale=0.71] at (12*\wiggle,-1*\wiggle) {$7$};

\node[scale=0.71] at (0*\wiggle,-2*\wiggle) {$6$};
\node[scale=0.71] at (1*\wiggle,-2*\wiggle) {$5$};
\node[scale=0.71] at (2*\wiggle,-2*\wiggle) {$1$};
\node[scale=0.71] at (3*\wiggle,-2*\wiggle) {$4$};
\node[scale=0.71] at (4*\wiggle,-2*\wiggle) {$9$};
\node[scale=0.71] at (5*\wiggle,-2*\wiggle) {$12$};
\node[scale=0.71] at (6*\wiggle,-2*\wiggle) {$8$};
\node[scale=0.71] at (7*\wiggle,-2*\wiggle) {$7$};
\node[scale=0.71] at (8*\wiggle,-2*\wiggle) {$3$};
\node[scale=0.71] at (9*\wiggle,-2*\wiggle) {$2$};
\node[scale=0.71] at (10*\wiggle,-2*\wiggle) {$11$};
\node[scale=0.71] at (11*\wiggle,-2*\wiggle) {$10$};
\node[scale=0.71] at (12*\wiggle,-2*\wiggle) {$6$};

\node[scale=0.71] at (0*\wiggle,-3*\wiggle) {$12$};
\node[scale=0.71] at (1*\wiggle,-3*\wiggle) {$8$};
\node[scale=0.71] at (2*\wiggle,-3*\wiggle) {$7$};
\node[scale=0.71] at (3*\wiggle,-3*\wiggle) {$3$};
\node[scale=0.71] at (4*\wiggle,-3*\wiggle) {$2$};
\node[scale=0.71] at (5*\wiggle,-3*\wiggle) {$11$};
\node[scale=0.71] at (6*\wiggle,-3*\wiggle) {$10$};
\node[scale=0.71] at (7*\wiggle,-3*\wiggle) {$6$};
\node[scale=0.71] at (8*\wiggle,-3*\wiggle) {$5$};
\node[scale=0.71] at (9*\wiggle,-3*\wiggle) {$1$};
\node[scale=0.71] at (10*\wiggle,-3*\wiggle) {$4$};
\node[scale=0.71] at (11*\wiggle,-3*\wiggle) {$9$};
\node[scale=0.71] at (12*\wiggle,-3*\wiggle) {$12$};

\node[scale=0.71] at (0*\wiggle,-4*\wiggle) {$11$};
\node[scale=0.71] at (1*\wiggle,-4*\wiggle) {$10$};
\node[scale=0.71] at (2*\wiggle,-4*\wiggle) {$6$};
\node[scale=0.71] at (3*\wiggle,-4*\wiggle) {$5$};
\node[scale=0.71] at (4*\wiggle,-4*\wiggle) {$1$};
\node[scale=0.71] at (5*\wiggle,-4*\wiggle) {$4$};
\node[scale=0.71] at (6*\wiggle,-4*\wiggle) {$9$};
\node[scale=0.71] at (7*\wiggle,-4*\wiggle) {$12$};
\node[scale=0.71] at (8*\wiggle,-4*\wiggle) {$8$};
\node[scale=0.71] at (9*\wiggle,-4*\wiggle) {$7$};
\node[scale=0.71] at (10*\wiggle,-4*\wiggle) {$3$};
\node[scale=0.71] at (11*\wiggle,-4*\wiggle) {$2$};
\node[scale=0.71] at (12*\wiggle,-4*\wiggle) {$11$};

\node[scale=0.71] at (0*\wiggle,-5*\wiggle) {$4$};
\node[scale=0.71] at (1*\wiggle,-5*\wiggle) {$9$};
\node[scale=0.71] at (2*\wiggle,-5*\wiggle) {$12$};
\node[scale=0.71] at (3*\wiggle,-5*\wiggle) {$8$};
\node[scale=0.71] at (4*\wiggle,-5*\wiggle) {$7$};
\node[scale=0.71] at (5*\wiggle,-5*\wiggle) {$3$};
\node[scale=0.71] at (6*\wiggle,-5*\wiggle) {$2$};
\node[scale=0.71] at (7*\wiggle,-5*\wiggle) {$11$};
\node[scale=0.71] at (8*\wiggle,-5*\wiggle) {$10$};
\node[scale=0.71] at (9*\wiggle,-5*\wiggle) {$6$};
\node[scale=0.71] at (10*\wiggle,-5*\wiggle) {$5$};
\node[scale=0.71] at (11*\wiggle,-5*\wiggle) {$1$};
\node[scale=0.71] at (12*\wiggle,-5*\wiggle) {$4$};

\node[scale=0.71] at (0*\wiggle,-6*\wiggle) {$3$};
\node[scale=0.71] at (1*\wiggle,-6*\wiggle) {$2$};
\node[scale=0.71] at (2*\wiggle,-6*\wiggle) {$11$};
\node[scale=0.71] at (3*\wiggle,-6*\wiggle) {$10$};
\node[scale=0.71] at (4*\wiggle,-6*\wiggle) {$6$};
\node[scale=0.71] at (5*\wiggle,-6*\wiggle) {$5$};
\node[scale=0.71] at (6*\wiggle,-6*\wiggle) {$1$};
\node[scale=0.71] at (7*\wiggle,-6*\wiggle) {$4$};
\node[scale=0.71] at (8*\wiggle,-6*\wiggle) {$9$};
\node[scale=0.71] at (9*\wiggle,-6*\wiggle) {$12$};
\node[scale=0.71] at (10*\wiggle,-6*\wiggle) {$8$};
\node[scale=0.71] at (11*\wiggle,-6*\wiggle) {$7$};
\node[scale=0.71] at (12*\wiggle,-6*\wiggle) {$3$};

\node[scale=0.71] at (0*\wiggle,-7*\wiggle) {$5$};
\node[scale=0.71] at (1*\wiggle,-7*\wiggle) {$1$};
\node[scale=0.71] at (2*\wiggle,-7*\wiggle) {$4$};
\node[scale=0.71] at (3*\wiggle,-7*\wiggle) {$9$};
\node[scale=0.71] at (4*\wiggle,-7*\wiggle) {$12$};
\node[scale=0.71] at (5*\wiggle,-7*\wiggle) {$8$};
\node[scale=0.71] at (6*\wiggle,-7*\wiggle) {$7$};
\node[scale=0.71] at (7*\wiggle,-7*\wiggle) {$3$};
\node[scale=0.71] at (8*\wiggle,-7*\wiggle) {$2$};
\node[scale=0.71] at (9*\wiggle,-7*\wiggle) {$11$};
\node[scale=0.71] at (10*\wiggle,-7*\wiggle) {$10$};
\node[scale=0.71] at (11*\wiggle,-7*\wiggle) {$6$};
\node[scale=0.71] at (12*\wiggle,-7*\wiggle) {$5$};

\node[scale=0.71] at (0*\wiggle,-8*\wiggle) {$8$};
\node[scale=0.71] at (1*\wiggle,-8*\wiggle) {$7$};
\node[scale=0.71] at (2*\wiggle,-8*\wiggle) {$3$};
\node[scale=0.71] at (3*\wiggle,-8*\wiggle) {$2$};
\node[scale=0.71] at (4*\wiggle,-8*\wiggle) {$11$};
\node[scale=0.71] at (5*\wiggle,-8*\wiggle) {$10$};
\node[scale=0.71] at (6*\wiggle,-8*\wiggle) {$6$};
\node[scale=0.71] at (7*\wiggle,-8*\wiggle) {$5$};
\node[scale=0.71] at (8*\wiggle,-8*\wiggle) {$1$};
\node[scale=0.71] at (9*\wiggle,-8*\wiggle) {$4$};
\node[scale=0.71] at (10*\wiggle,-8*\wiggle) {$9$};
\node[scale=0.71] at (11*\wiggle,-8*\wiggle) {$12$};
\node[scale=0.71] at (12*\wiggle,-8*\wiggle) {$8$};

\node[scale=0.71] at (0*\wiggle,-9*\wiggle) {$10$};
\node[scale=0.71] at (1*\wiggle,-9*\wiggle) {$6$};
\node[scale=0.71] at (2*\wiggle,-9*\wiggle) {$5$};
\node[scale=0.71] at (3*\wiggle,-9*\wiggle) {$1$};
\node[scale=0.71] at (4*\wiggle,-9*\wiggle) {$4$};
\node[scale=0.71] at (5*\wiggle,-9*\wiggle) {$9$};
\node[scale=0.71] at (6*\wiggle,-9*\wiggle) {$12$};
\node[scale=0.71] at (7*\wiggle,-9*\wiggle) {$8$};
\node[scale=0.71] at (8*\wiggle,-9*\wiggle) {$7$};
\node[scale=0.71] at (9*\wiggle,-9*\wiggle) {$3$};
\node[scale=0.71] at (10*\wiggle,-9*\wiggle) {$2$};
\node[scale=0.71] at (11*\wiggle,-9*\wiggle) {$11$};
\node[scale=0.71] at (12*\wiggle,-9*\wiggle) {$10$};

\node[scale=0.71] at (0*\wiggle,-10*\wiggle) {$9$};
\node[scale=0.71] at (1*\wiggle,-10*\wiggle) {$12$};
\node[scale=0.71] at (2*\wiggle,-10*\wiggle) {$8$};
\node[scale=0.71] at (3*\wiggle,-10*\wiggle) {$7$};
\node[scale=0.71] at (4*\wiggle,-10*\wiggle) {$3$};
\node[scale=0.71] at (5*\wiggle,-10*\wiggle) {$2$};
\node[scale=0.71] at (6*\wiggle,-10*\wiggle) {$11$};
\node[scale=0.71] at (7*\wiggle,-10*\wiggle) {$10$};
\node[scale=0.71] at (8*\wiggle,-10*\wiggle) {$6$};
\node[scale=0.71] at (9*\wiggle,-10*\wiggle) {$5$};
\node[scale=0.71] at (10*\wiggle,-10*\wiggle) {$1$};
\node[scale=0.71] at (11*\wiggle,-10*\wiggle) {$4$};
\node[scale=0.71] at (12*\wiggle,-10*\wiggle) {$9$};

\node[scale=0.71] at (0*\wiggle,-11*\wiggle) {$2$};
\node[scale=0.71] at (1*\wiggle,-11*\wiggle) {$11$};
\node[scale=0.71] at (2*\wiggle,-11*\wiggle) {$10$};
\node[scale=0.71] at (3*\wiggle,-11*\wiggle) {$6$};
\node[scale=0.71] at (4*\wiggle,-11*\wiggle) {$5$};
\node[scale=0.71] at (5*\wiggle,-11*\wiggle) {$1$};
\node[scale=0.71] at (6*\wiggle,-11*\wiggle) {$4$};
\node[scale=0.71] at (7*\wiggle,-11*\wiggle) {$9$};
\node[scale=0.71] at (8*\wiggle,-11*\wiggle) {$12$};
\node[scale=0.71] at (9*\wiggle,-11*\wiggle) {$8$};
\node[scale=0.71] at (10*\wiggle,-11*\wiggle) {$7$};
\node[scale=0.71] at (11*\wiggle,-11*\wiggle) {$3$};
\node[scale=0.71] at (12*\wiggle,-11*\wiggle) {$2$};

\node[scale=0.71] at (0*\wiggle,-12*\wiggle) {$1$};
\node[scale=0.71] at (1*\wiggle,-12*\wiggle) {$4$};
\node[scale=0.71] at (2*\wiggle,-12*\wiggle) {$9$};
\node[scale=0.71] at (3*\wiggle,-12*\wiggle) {$12$};
\node[scale=0.71] at (4*\wiggle,-12*\wiggle) {$8$};
\node[scale=0.71] at (5*\wiggle,-12*\wiggle) {$7$};
\node[scale=0.71] at (6*\wiggle,-12*\wiggle) {$3$};
\node[scale=0.71] at (7*\wiggle,-12*\wiggle) {$2$};
\node[scale=0.71] at (8*\wiggle,-12*\wiggle) {$11$};
\node[scale=0.71] at (9*\wiggle,-12*\wiggle) {$10$};
\node[scale=0.71] at (10*\wiggle,-12*\wiggle) {$6$};
\node[scale=0.71] at (11*\wiggle,-12*\wiggle) {$5$};
\node[scale=0.71] at (12*\wiggle,-12*\wiggle) {$1$};

\end{tikzpicture}
\end{equation*}
In particular, every row and column of the McKay quiver contains the vertex $1$, which corresponds to the fact that $ab$ has order equal to half of the order of $G$. By Theorem \ref{AuslanderForCoaction}, it follows that the Auslander map is an isomorphism in this case. \\
\indent The toroidal presentation of the quiver also shows that the invariant ring $A^H$ is generated by five elements, which correspond to paths in $\Lambda$ from vertex $1$ to itself of lengths $4,8,8,12,$ and $12$. In fact, one can show that the McKay quiver is identical to the McKay quiver of a $\frac{1}{12}(1,5)$ singularity, and so $A^H$ is a non-Gorenstein type $\mathbb{A}$ singularity (see Sections 5 and 6 of \cite{gl2c}).
\end{example}

\subsection{Regularity of the invariant ring}
In this section, we determine precisely when the invariant ring corresponding to a pair $(A,G)$ from Proposition \ref{prop:Classifying} is AS regular. \\
\indent Suppose that $G = \langle a,b \mid \mathcal{R} \rangle$ is as in Proposition \ref{prop:Classifying} so that, in particular, $a^2 = b^2$ must be a relation in $G$. Since $G$ is finite, it follows that $a$ and $b$ must have the same order, as must $ab$ and $ba$; call these orders $i$ and $m$, respectively, both of which are $\geqslant 2$ since $G$ is non-abelian. In particular,
\begin{align*}
1_G = (ab)^m (ba)^m = a^{4m}    
\end{align*}
so that $i \leqslant 4m$. It follows that $G$ must be a quotient of the group
\begin{align*}
\Gamma_m \coloneqq \langle a,b \mid a^2=b^2, a^{4m} = b^{4m} = (ab)^m = (ba)^m = 1_G \rangle.
\end{align*}

\begin{lem}
$|\Gamma_m| = 4m^2$.
\end{lem}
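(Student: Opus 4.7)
The plan is to sandwich $|\Gamma_m|$ between matching upper and lower bounds of $4m^2$, with the element $z := a^2 = b^2$ playing the key role.

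I would first observe that $z$ is central in $\Gamma_m$: indeed $za = a^3 = az$ and similarly $zb = bz$, so since $\Gamma_m = \langle a,b\rangle$ the element $z$ lies in the centre. Moreover $z^{2m} = a^{4m} = 1_G$, so $\langle z \rangle$ has order dividing $2m$. Passing to the quotient $\bar\Gamma := \Gamma_m/\langle z \rangle$, the images $\bar a, \bar b$ satisfy $\bar a^2 = \bar b^2 = 1$ and $(\bar a \bar b)^m = 1$, exhibiting $\bar\Gamma$ as a homomorphic image of the dihedral group $D_m$ of order $2m$. Multiplying these bounds yields $|\Gamma_m| \leq 4m^2$.

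The main obstacle is the matching lower bound, for which both $|\langle z \rangle| = 2m$ and $|\bar\Gamma| = 2m$ must actually be achieved; neither is automatic from the presentation alone. The cleanest route I see is to exhibit an explicit $2$-dimensional complex representation. With $\zeta = e^{2\pi i/m}$ and $\omega = e^{\pi i/m}$ (so $\omega^2 = \zeta$ and $|\omega| = 2m$), I will set
\[
A := \begin{pmatrix} 0 & \omega \\ 1 & 0 \end{pmatrix}, \qquad B := \begin{pmatrix} 0 & \zeta \\ \omega^{-1} & 0 \end{pmatrix} \in \GL(2,\CC),
\]
and verify by direct computation that $A^2 = B^2 = \omega I$ (hence $A^{4m} = B^{4m} = I$) and that $AB = \mathrm{diag}(1,\zeta)$, so $(AB)^m = (BA)^m = I$. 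These are precisely the defining relations of $\Gamma_m$, so the assignment $a \mapsto A, b \mapsto B$ extends to a homomorphism $\Gamma_m \to \GL(2,\CC)$. The image consists of the $2m^2$ diagonal matrices $\omega^j\,\mathrm{diag}(1,\zeta^i)$ together with the $2m^2$ antidiagonal matrices $\omega^j \bigl(\begin{smallmatrix} 0 & \omega \\ \zeta^i & 0 \end{smallmatrix}\bigr)$, ranging over $0 \leq j < 2m$ and $0 \leq i < m$, and these are pairwise distinct (diagonal versus antidiagonal is obvious; within each family the parameters $j, i$ are recovered from the entries). Hence $|\Gamma_m| \geq 4m^2$, closing the sandwich.
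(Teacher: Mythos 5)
Your argument is correct, and it is built from the same two ingredients as the paper's proof: the central element $a^2=b^2$ with its dihedral quotient, and a two-dimensional monomial representation over a primitive $2m$th root of unity (your matrix $B$ differs from the paper's choice only by a harmless rescaling). The difference is in how the ingredients are combined. The paper computes the order exactly as a product: the representation pins down the order of $a$ as exactly $4m$, hence $|\langle a^2 \rangle| = 2m$, and $\Gamma_m/\langle a^2\rangle$ is identified with $D_m$ of order exactly $2m$ via the presentation $\langle a,b \mid a^2=b^2=(ab)^m=1\rangle$, giving $|\Gamma_m| = 2m\cdot 2m$. You instead sandwich: from the presentation you extract only the inequalities $|\langle a^2\rangle| \leqslant 2m$ and $|\Gamma_m/\langle a^2\rangle| \leqslant 2m$ (so you need only the ``at most $2m$'' half of the standard fact that the Coxeter presentation defines $D_m$), and the matching lower bound comes from exhibiting $4m^2$ pairwise distinct matrices in the image of the representation; for that bound it suffices that the listed matrices lie in the image, which is clear since they are $A^{2j}(AB)^i$ and $A^{2j}(AB)^iA$, so the stronger claim that the image \emph{consists} of them is not even needed. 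Your route is slightly longer but makes explicit where the exactness comes from and shows in passing that the representation is faithful; the paper's route is shorter because it lets the recognized dihedral quotient carry the exact count.
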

\begin{proof}
Letting $\omega$ be a primitive $2m$th root of unity, if we identify $a$ and $b$ with the matrices 
\begin{align*}
\begin{pmatrix}
0 & \omega \\ 1 & 0
\end{pmatrix}
\quad \text{and} \quad 
\begin{pmatrix}
0 & 1 \\ \omega & 0
\end{pmatrix},
\end{align*}
then it is easy to check that they satisfy the relations in $\Gamma_m$. This gives rise to a representation of some quotient of $\Gamma_m$, but in particular it tells us that $a$ has order $4m$. Noting that $\Gamma_m/\langle a^2 \rangle \cong D_m$, where $D_m$ is the dihedral group of order $2m$, we find that
\begin{align*}
\frac{|\Gamma_m|}{2m} = \frac{|\Gamma_m|}{|\langle a^2 \rangle|} = |D_m| = 2m,
\end{align*}
so that $|\Gamma_m| = 4m^2$.
\end{proof}

Using this, we can prove the main result of this section:

\begin{thm} \label{thm:InvariantRegular}
$A_{1_G}$ is AS regular if and only if $G = \Gamma_m$ for some $m$.
\end{thm}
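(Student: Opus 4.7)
The idea is to use the McKay quiver description of $A^H\cong e_1\Lambda e_1$ from Theorem \ref{thm:QuiverAlgebra} to realise $A^H$ as a commutative semigroup algebra, and then apply the classical fact cited in Section \ref{sec:Preliminaries} that a connected graded commutative $\Bbbk$-algebra is AS regular if and only if it is a polynomial ring.

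Let $P\colon \ZZ^2\to G$ be the vertex-label function in the lattice presentation of the McKay quiver (so $P(0,0)=1_G$) and set $L\coloneqq P^{-1}(1_G)$. I will first show that every element of $L$ has even coordinate sum: any word in $a,b$ representing $1_G$ is forced to have even length after passing to the dihedral quotient $G\to D_m\cong G/\langle a^2\rangle$. A chaining argument then shows that $L$ is a subgroup of $\ZZ^2$ and that every fibre $P^{-1}(g)$ is a single coset of $L$ (the even-parity case is immediate; the odd-parity case reduces to the even one by shifting a single step east, where the resulting label is $b^{-1}g$). Consequently $[\ZZ^2:L]=|G|$, and since $(2m,0),(0,2m)\in L$ (corresponding to $(uv)^m,(vu)^m\in A^H$) we have $|G|\cdot[L:2m\ZZ^2]=4m^2$; thus $L=2m\ZZ^2$ iff $|G|=4m^2=|\Gamma_m|$ iff $G=\Gamma_m$. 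Because any two paths in the lattice between fixed endpoints are equal in $\Lambda$, composition of paths in $e_1\Lambda e_1$ corresponds to addition of coordinates, giving an isomorphism of graded algebras $A^H\cong\Bbbk[S]$, where $S\coloneqq L\cap \ZZ_{\geqslant 0}^2$; in particular $A^H$ is commutative.

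If $G=\Gamma_m$, then $S=2m\ZZ_{\geqslant 0}^2$ is freely generated by $(2m,0)$ and $(0,2m)$, so $A^H\cong\Bbbk[X,Y]$ is a polynomial ring and hence AS regular. Conversely, if $G\neq\Gamma_m$, then $L\supsetneq 2m\ZZ^2$, and reducing any element of $L\setminus 2m\ZZ^2$ modulo $2m\ZZ^2$ yields some $(i,j)\in L$ with $0\leqslant i,j<2m$ and $(i,j)\neq(0,0)$. The claim that in fact $0<i,j$ follows because an axis element $(i,0)\in L$ with $0<i<2m$ is either of odd parity (contradicting the parity constraint on $L$) or of the form $(2k,0)$ with $P(2k,0)=(ab)^{-k}=1_G$ and $k<m$ (contradicting $|ab|=m$); symmetry handles the other axis. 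With $0<i,j<2m$, the non-trivial semigroup identity $i\cdot(2m,0)+j\cdot(0,2m)=2m\cdot(i,j)$ prevents $S$ from being free, so $A^H$ is not a polynomial ring and therefore not AS regular.

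The main technical difficulty is the structural result that each $P$-fibre is a single $L$-coset; this rests on the even-parity property of $L$ together with the short parity-shifting argument described above. The remainder of the proof is a combinatorial check in the semigroup $L\cap\ZZ_{\geqslant 0}^2$.
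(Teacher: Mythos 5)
Your route is genuinely different from the paper's and is sound in outline: you turn the problem into lattice combinatorics by showing that the fibres of the labelling $P:\ZZ^2\to G$ are single cosets of $L=P^{-1}(1_G)$ and that $A^H\cong\Bbbk[S]$ with $S=L\cap\ZZ_{\geqslant 0}^2$, so everything reduces to whether $L=2m\ZZ^2$, i.e.\ whether $|G|=4m^2$. The paper instead argues directly on the toroidal quiver: when $G=\Gamma_m$ it counts vertices to see that $1_G$ occurs only at the corners (giving $A^H\cong\Bbbk[x,y]$), and when $G\neq\Gamma_m$ it first proves group-theoretically that the order $i$ of $a$ drops below $4m$, so that $u^i$ forces a third generator, contradicting regularity. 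Your index computation $|G|\cdot[L:2m\ZZ^2]=4m^2$ replaces that group-theoretic step and produces the extra low-degree invariant $(i,j)$ more uniformly, which is a genuine simplification; the cost is the structural lemma that each fibre of $P$ is one coset of $L$, and your parity-plus-east-shift argument does establish this. (One caveat there: the claim $G/\langle a^2\rangle\cong D_m$ can fail --- for $G=Q_8$ one has $m=4$ but the quotient is $C_2\times C_2$ --- yet all you need is the sign character $G\to\{\pm 1\}$ sending $a,b\mapsto -1$, which exists for any quotient generated by two involutions, so your parity conclusion stands.)

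The one real gap is the final inference. The identity $i\cdot(2m,0)+j\cdot(0,2m)=2m\cdot(i,j)$ does not by itself prevent $S$ from being free: in the free monoid $\ZZ_{\geqslant 0}^2$ the elements $(2,0)$, $(0,2)$, $(1,1)$ satisfy exactly such an identity. What rescues the step is a fact you proved but did not invoke: since $S$ contains no axis point $(k,0)$ or $(0,k)$ with $0<k<2m$, the elements $(2m,0)$ and $(0,2m)$ are irreducible in $S$ (any factorisation of $(2m,0)$ into two nonzero elements of $S$ would produce such an axis point). Hence if $S$ were free, these two irreducibles would be free generators; being $\ZZ$-linearly independent in a rank-two lattice they would be all of the generators, forcing $S=2m\ZZ_{\geqslant 0}^2$ and contradicting $(i,j)\in S$. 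Equivalently, $S$ has at least three irreducible elements, so the commutative connected graded ring $A^H\cong\Bbbk[S]$ requires at least three homogeneous generators, whereas if it were AS regular it would be a polynomial ring of Krull dimension two and hence minimally generated by two elements. With this patch (which uses only material already in your write-up) the argument is complete.
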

\begin{proof}
With notation as above, if we assume that $ab$ has order $m$ then it follows that $G$ must be a quotient of $\Gamma_m$. Therefore the toroidal presentation of the associated McKay quiver has the following form:
\begin{equation*}
\begin{tikzpicture}[thick,scale=1]
\def \wiggle {1};

\node at (-4.4,0) {}; %used to centre the quiver

\node at (0*\wiggle,0*\wiggle) {$1_G$};
\node at (1*\wiggle,0*\wiggle) {$*$};
\node at (2*\wiggle,0*\wiggle) {$*$};
\node at (3*\wiggle,0*\wiggle) {$\cdots$};
\node at (4*\wiggle,0*\wiggle) {$*$};
\node at (5*\wiggle,0*\wiggle) {$1_G$};

\node at (0*\wiggle,-1*\wiggle) {$*$};
\node at (1*\wiggle,-1*\wiggle) {$*$};
\node at (2*\wiggle,-1*\wiggle) {$*$};
\node at (3*\wiggle,-1*\wiggle) {$\cdots$};
\node at (4*\wiggle,-1*\wiggle) {$*$};
\node at (5*\wiggle,-1*\wiggle) {$*$};

\node at (0*\wiggle,-2*\wiggle) {$*$};
\node at (1*\wiggle,-2*\wiggle) {$*$};
\node at (2*\wiggle,-2*\wiggle) {$*$};
\node at (3*\wiggle,-2*\wiggle) {$\cdots$};
\node at (4*\wiggle,-2*\wiggle) {$*$};
\node at (5*\wiggle,-2*\wiggle) {$*$};

\node at (0*\wiggle,-3*\wiggle+0.1) {$\vdots$};
\node at (1*\wiggle,-3*\wiggle+0.1) {$\vdots$};
\node at (2*\wiggle,-3*\wiggle+0.1) {$\vdots$};
\node at (3*\wiggle,-3*\wiggle+0.1) {$\ddots$};
\node at (4*\wiggle,-3*\wiggle+0.1) {$\vdots$};
\node at (5*\wiggle,-3*\wiggle+0.1) {$\vdots$};

\node at (0*\wiggle,-4*\wiggle) {$*$};
\node at (1*\wiggle,-4*\wiggle) {$*$};
\node at (2*\wiggle,-4*\wiggle) {$*$};
\node at (3*\wiggle,-4*\wiggle) {$\cdots$};
\node at (4*\wiggle,-4*\wiggle) {$*$};
\node at (5*\wiggle,-4*\wiggle) {$*$};

\node at (0*\wiggle,-5*\wiggle) {$1_G$};
\node at (1*\wiggle,-5*\wiggle) {$*$};
\node at (2*\wiggle,-5*\wiggle) {$*$};
\node at (3*\wiggle,-5*\wiggle) {$\cdots$};
\node at (4*\wiggle,-5*\wiggle) {$*$};
\node at (5*\wiggle,-5*\wiggle) {$1_G$};

\node [minimum width=0.56cm,minimum height=0.56cm](1) at (0*\wiggle,0*\wiggle) {};
\node [minimum width=0.56cm,minimum height=0.56cm](2) at (1*\wiggle,0*\wiggle) {};
\node [minimum width=0.56cm,minimum height=0.56cm](3) at (2*\wiggle,0*\wiggle) {};
\node [minimum width=0.56cm,minimum height=0.56cm](4) at (3*\wiggle,0*\wiggle) {};
\node [minimum width=0.56cm,minimum height=0.56cm](5) at (4*\wiggle,0*\wiggle) {};
\node [minimum width=0.56cm,minimum height=0.56cm](6) at (5*\wiggle,0*\wiggle) {};

\node [minimum width=0.56cm,minimum height=0.56cm](8) at (0*\wiggle,-1*\wiggle) {};
\node [minimum width=0.56cm,minimum height=0.56cm](9) at (1*\wiggle,-1*\wiggle) {};
\node [minimum width=0.56cm,minimum height=0.56cm](10) at (2*\wiggle,-1*\wiggle) {};
\node [minimum width=0.56cm,minimum height=0.56cm](11) at (3*\wiggle,-1*\wiggle) {};
\node [minimum width=0.56cm,minimum height=0.56cm](12) at (4*\wiggle,-1*\wiggle) {};
\node [minimum width=0.56cm,minimum height=0.56cm](13) at (5*\wiggle,-1*\wiggle) {};

\node [minimum width=0.56cm,minimum height=0.56cm](15) at (0*\wiggle,-2*\wiggle) {};
\node [minimum width=0.56cm,minimum height=0.56cm](16) at (1*\wiggle,-2*\wiggle) {};
\node [minimum width=0.56cm,minimum height=0.56cm](17) at (2*\wiggle,-2*\wiggle) {};
\node [minimum width=0.56cm,minimum height=0.56cm](18) at (3*\wiggle,-2*\wiggle) {};
\node [minimum width=0.56cm,minimum height=0.56cm](19) at (4*\wiggle,-2*\wiggle) {};
\node [minimum width=0.56cm,minimum height=0.56cm](20) at (5*\wiggle,-2*\wiggle) {};

\node [minimum width=0.56cm,minimum height=0.56cm](22) at (0*\wiggle,-3*\wiggle) {};
\node [minimum width=0.56cm,minimum height=0.56cm](23) at (1*\wiggle,-3*\wiggle) {};
\node [minimum width=0.56cm,minimum height=0.56cm](24) at (2*\wiggle,-3*\wiggle) {};
\node [minimum width=0.56cm,minimum height=0.56cm](25) at (3*\wiggle,-3*\wiggle) {};
\node [minimum width=0.56cm,minimum height=0.56cm](26) at (4*\wiggle,-3*\wiggle) {};
\node [minimum width=0.56cm,minimum height=0.56cm](27) at (5*\wiggle,-3*\wiggle) {};

\node [minimum width=0.56cm,minimum height=0.56cm](29) at (0*\wiggle,-4*\wiggle) {};
\node [minimum width=0.56cm,minimum height=0.56cm](30) at (1*\wiggle,-4*\wiggle) {};
\node [minimum width=0.56cm,minimum height=0.56cm](31) at (2*\wiggle,-4*\wiggle) {};
\node [minimum width=0.56cm,minimum height=0.56cm](32) at (3*\wiggle,-4*\wiggle) {};
\node [minimum width=0.56cm,minimum height=0.56cm](33) at (4*\wiggle,-4*\wiggle) {};
\node [minimum width=0.56cm,minimum height=0.56cm](34) at (5*\wiggle,-4*\wiggle) {};

\node [minimum width=0.56cm,minimum height=0.56cm](36) at (0*\wiggle,-5*\wiggle) {};
\node [minimum width=0.56cm,minimum height=0.56cm](37) at (1*\wiggle,-5*\wiggle) {};
\node [minimum width=0.56cm,minimum height=0.56cm](38) at (2*\wiggle,-5*\wiggle) {};
\node [minimum width=0.56cm,minimum height=0.56cm](39) at (3*\wiggle,-5*\wiggle) {};
\node [minimum width=0.56cm,minimum height=0.56cm](40) at (4*\wiggle,-5*\wiggle) {};
\node [minimum width=0.56cm,minimum height=0.56cm](41) at (5*\wiggle,-5*\wiggle) {};

\draw[->,>=stealth,red] (1) -- (2);
\draw[->,>=stealth,blue] (2) -- (3);
\draw[->,>=stealth,blue] (5) -- (6);

\draw[->,>=stealth,blue] (8) -- (9);
\draw[->,>=stealth,red] (9) -- (10);
\draw[->,>=stealth,red] (12) -- (13);

\draw[->,>=stealth,red] (15) -- (16);
\draw[->,>=stealth,blue] (16) -- (17);
\draw[->,>=stealth,blue] (19) -- (20);

\draw[->,>=stealth,blue] (29) -- (30);
\draw[->,>=stealth,red] (30) -- (31);
\draw[->,>=stealth,red] (33) -- (34);

\draw[->,>=stealth,red] (36) -- (37);
\draw[->,>=stealth,blue] (37) -- (38);
\draw[->,>=stealth,blue] (40) -- (41);

\draw[->,>=stealth,blue] (1) -- (8);
\draw[->,>=stealth,red] (2) -- (9);
\draw[->,>=stealth,blue] (3) -- (10);
\draw[->,>=stealth,red] (5) -- (12);
\draw[->,>=stealth,blue] (6) -- (13);

\draw[->,>=stealth,red] (8) -- (15);
\draw[->,>=stealth,blue] (9) -- (16);
\draw[->,>=stealth,red] (10) -- (17);
\draw[->,>=stealth,blue] (12) -- (19);
\draw[->,>=stealth,red] (13) -- (20);

\draw[->,>=stealth,red] (29) -- (36);
\draw[->,>=stealth,blue] (30) -- (37);
\draw[->,>=stealth,red] (31) -- (38);
\draw[->,>=stealth,blue] (33) -- (40);
\draw[->,>=stealth,red] (34) -- (41);

\node [minimum width=0,minimum height=0](100) at (2.65*\wiggle,0*\wiggle) {};
\node [minimum width=0,minimum height=0](101) at (3.35*\wiggle,0*\wiggle) {};
\draw[red] (3) -- (100);
\draw[->,>=stealth,red] (101) -- (5);

\node [minimum width=0,minimum height=0](107) at (2.65*\wiggle,-1*\wiggle) {};
\node [minimum width=0,minimum height=0](108) at (3.35*\wiggle,-1*\wiggle) {};
\draw[blue] (10) -- (107);
\draw[->,>=stealth,blue] (108) -- (12);

\node [minimum width=0,minimum height=0](114) at (2.65*\wiggle,-2*\wiggle) {};
\node [minimum width=0,minimum height=0](115) at (3.35*\wiggle,-2*\wiggle) {};
\draw[red] (17) -- (114);
\draw[->,>=stealth,red] (115) -- (19);

\node [minimum width=0,minimum height=0](128) at (2.65*\wiggle,-4*\wiggle) {};
\node [minimum width=0,minimum height=0](129) at (3.35*\wiggle,-4*\wiggle) {};
\draw[blue] (31) -- (128);
\draw[->,>=stealth,blue] (129) -- (33);

\node [minimum width=0,minimum height=0](135) at (2.65*\wiggle,-5*\wiggle) {};
\node [minimum width=0,minimum height=0](136) at (3.35*\wiggle,-5*\wiggle) {};
\draw[red] (38) -- (135);
\draw[->,>=stealth,red] (136) -- (40);

\node [minimum width=0,minimum height=0](200) at (0*\wiggle,-2.65*\wiggle) {};
\node [minimum width=0,minimum height=0](207) at (0*\wiggle,-3.35*\wiggle) {};
\draw[blue] (15) -- (200);
\draw[->,>=stealth,blue] (207) -- (29);

\node [minimum width=0,minimum height=0](201) at (1*\wiggle,-2.65*\wiggle) {};
\node [minimum width=0,minimum height=0](208) at (1*\wiggle,-3.35*\wiggle) {};
\draw[red] (16) -- (201);
\draw[->,>=stealth,red] (208) -- (30);

\node [minimum width=0,minimum height=0](202) at (2*\wiggle,-2.65*\wiggle) {};
\node [minimum width=0,minimum height=0](209) at (2*\wiggle,-3.35*\wiggle) {};
\draw[blue] (17) -- (202);
\draw[->,>=stealth,blue] (209) -- (31);

\node [minimum width=0,minimum height=0](204) at (4*\wiggle,-2.65*\wiggle) {};
\node [minimum width=0,minimum height=0](211) at (4*\wiggle,-3.35*\wiggle) {};
\draw[red] (19) -- (204);
\draw[->,>=stealth,red] (211) -- (33);

\node [minimum width=0,minimum height=0](205) at (5*\wiggle,-2.65*\wiggle) {};
\node [minimum width=0,minimum height=0](212) at (5*\wiggle,-3.35*\wiggle) {};
\draw[blue] (20) -- (205);
\draw[->,>=stealth,blue] (212) -- (34);

\draw [decorate, 
    decoration = {brace,
        raise=20pt,
        amplitude=5pt}] (5*\wiggle,0*\wiggle) --  (5*\wiggle,-5*\wiggle) node[pos=0.5,right=25pt,black]{$2m+1$ vertices};
        
\draw [decorate, 
    decoration = {brace, mirror,
        raise=20pt,
        amplitude=5pt}] (0*\wiggle,-5*\wiggle) --  (5*\wiggle,-5*\wiggle) node[pos=0.5,below=25pt,black]{$2m+1$ vertices};

\end{tikzpicture}
%\label{RegularMcKayQuiv}
\end{equation*}
In this diagram, the asterisks represent arbitrary elements of $G$, which may include the identity. \\
\indent In light of our conventions from Section \ref{sec:Quivers}, the red arrows are decorated by $u$ and the blue arrows by $v$, the sequence of vertices in the top row is $1_G$, $a^{-1}$, $b^{-1} a^{-1}$, $a^{-1} b^{-1} a^{-1}, \dots, 1_G$, and there are $2m+1$ vertices since $ab$ has order $m$. A similar analysis applies to the left hand column. In particular, none of the asterisks in the first and last row, or the leftmost and rightmost columns, correspond to $1_G$. \\
\indent Now, by Theorem \ref{thm:QuiverAlgebra}, we have $A_{1_G} \cong e_{1} \Lambda e_{1}$. From the above presentation of $Q$, we see that there is a horizontal path and a vertical path from $1_G$ to itself, both of length $2m$, and neither of these can be factored as a product of shorter paths from $1_G$ to itself. Therefore these elements can be chosen to be elements in the generating set for $e_{1} \Lambda e_{1}$; call the horizontal path $x$ and the vertical path $y$. In $A_{1_G}$, these elements correspond to $(uv)^m$ and $(vu)^m$. \\
\indent First suppose that $G = \Gamma_m$ for some $m$. If we ignore the vertices on the right hand side and the bottom row of the McKay quiver (which are already accounted for by their identification with the left hand side and top row, respectively) there are exactly $2m \times 2m = 4m^2$ vertices remaining. Moreover, each vertex is labelled by a group element and each group element must appear at least once among these vertices. Since $|\Gamma_m| = 4m^2$, each group element appears exactly once. In particular, the identity appears only at the corners of the toroidal presentation. Therefore every path from $1_G$ to itself in the quiver can be obtained as a composition of the paths $x$ and $y$, so that $x$ and $y$ generate $e_{1} \Lambda e_{1}$. It is straightforward to check that they commute and satisfy no other relations, so that $A_{1_G} \cong e_{1} \Lambda e_{1}$ is a polynomial ring in two variables. In particular, it is AS regular.
\\
\indent Now suppose that $G \neq \Gamma_m$. In particular, it is a proper quotient $\Gamma_m/N$ of $\Gamma_m$, where $N$ is a normal subgroup of $\Gamma_m$; we claim that this forces $i$, the order of the image of $a$ in $\Gamma_m/N$, to be strictly less than $4m$. Indeed, if $i = 4m$ then
\begin{align*}
\frac{|\Gamma_m/N|}{2m} = \left| \frac{\Gamma_m/N}{\langle a^2 N \rangle/N} \right| = \left| \frac{\Gamma_m}{\langle a^2 \rangle} \right| = | D_m | = 2m,
\end{align*}
where $D_m$ is the dihedral group of order $2m$. Consequently, $\Gamma_m/N$ has order $4m^2$, contradicting the fact that it is a proper quotient of $\Gamma_m$. It follows that we must have $i < 4m$. Now, $u^i \in A_{1_G}$, and it is certainly not a linear combination of $(uv)^m$ and $(vu)^m$. Also, it cannot be a quadratic (or higher) expression in $(uv)^m$ and $(vu)^m$, since such an expression has degree (at least) $4m$, which is greater than $i$. We note that $u^i$ need not necessarily be a generator itself, but in this case an additional generator is required to be able to obtain $u^i$. In either case, $A_{1_G}$ requires at least three generators. However, if $A_{1_G}$ were AS regular, then it would be 2-dimensional, and hence have two generators; therefore it cannot be AS regular.
\end{proof}

\begin{rem}
%When $A_{1_G}$ is not AS regular, our testing suggests that, at least for groups of small order, it is usually the case that we can choose $u^i$ to be a generator, for some $i$. 
The smallest group (with its implicit $G$-grading on $\Bbbk\langle u,v \rangle/\langle u^2-v^2 \rangle$) for which it is not possible to take $u^i$ as a generator for the invariant ring has order $48$, and presentation
\begin{align*}
G = \langle a,b \mid a^2=b^2, a^{16} = (a^7b)^3 = 1_G \rangle.
\end{align*}
In this case, the smallest power of $u$ that is an invariant is $u^{16}$ and $A_{1_G}$ is generated by $(uv)^{24}$, $(vu)^{24}$, $u^3(vu)^2v$, and $u^2(vu)^3$; here, $u^{16} = (u^3(vu)^2v)(u^2(vu)^3)$.
\end{rem}

\bibliographystyle{amsalpha}
\bibliography{bibliography}

\end{document}